\documentclass[10pt,reqno]{amsart}
\textheight=210 mm
\textwidth=140 mm
   \usepackage[centertags]{amsmath}
   \usepackage{amsfonts}
   \usepackage{amsmath}
   \usepackage{amssymb}
   \usepackage{amsthm}
   \usepackage{newlfont}
   \usepackage[latin1]{inputenc}
\usepackage{multirow, bigdelim}

\usepackage{epsfig}
\usepackage{pst-all}
\usepackage{pstricks}
\usepackage{pgf}
\usepackage{mathrsfs}
\usepackage{hyperref}
\usepackage{bm}

\usepackage{graphicx}

\usepackage{bm}

\usepackage{mathtools}
\usepackage{ifmtarg}

\makeatletter

    \newcommand\contFrac{\@ifstar{\@contFracStar}{\@contFracNoStar}}

    \def\singleContFrac#1#2{%
        \begin{array}{@{}c@{}}%
            \multicolumn{1}{c|}{#1}%
            \\%
            \hline%
            \multicolumn{1}{|c}{#2}%
        \end{array}%
    }

    \def\@contFracNoStar#1{%
        \mathchoice{
            \@contFracNoStarDisplay@#1//\@nil%
        }{
            \@contFracNoStarInline@#1//\@nil%
        }{
            \@contFracNoStarInline@#1//\@nil%
        }{
            \@contFracNoStarInline@#1//\@nil%
        }%
    }

    \def\@contFracNoStarDisplay@#1//#2\@nil{%
        \@ifmtarg{#2}{%
            #1%
        }{%
            #1+\cfrac{1}{\@contFracNoStarDisplay@#2\@nil}%
        }%
    }

        \def\@contFracNoStarInline@#1//#2\@nil{%
            \@ifmtarg{#2}{%
                #1%
            }{%
                #1 \@@contFracNoStarInline@@#2\@nil%
            }%
        }
        \def\@@contFracNoStarInline@@#1//#2\@nil{%
            \@ifmtarg{#2}{%
                + \singleContFrac{1}{#1}%
            }{%
                + \singleContFrac{1}{#1} \@@contFracNoStarInline@@#2\@nil%
            }%
        }

    \def\@contFracStar#1{%
        \mathchoice{
            \@contFracStarDisplay@#1////\@nil%
        }{
            \@contFracStarInline@#1//\@nil%
        }{
            \@contFracStarInline@#1//\@nil%
        }{
            \@contFracStarInline@#1//\@nil%
        }%
    }

    \def\@contFracStarDisplay@#1//#2//#3\@nil{%
        \@ifmtarg{#2}{%
            #1%
        }{%
            #1 + \cfrac{#2}{\@contFracStarDisplay@#3\@nil}%
        }%
    }

        \def\@contFracStarInline@#1//#2\@nil{%
            \@ifmtarg{#2}{%
                #1%
            }{%
                #1 \@@contFracStarInline@@#2\@nil%
            }%
        }
        \def\@@contFracStarInline@@#1//#2//#3\@nil{%
            \@ifmtarg{#3}{%
                - \singleContFrac{#1}{#2}%
            }{%
                - \singleContFrac{#1}{#2} \@@contFracStarInline@@#3\@nil%
            }%
        }
\makeatother



\allowdisplaybreaks[3]

\theoremstyle{plain}
\newtheorem{theorem}{Theorem}[section]
\newtheorem{lemma}[theorem]{Lemma}
\newtheorem{proposition}[theorem]{Proposition}
\newtheorem{corollary}[theorem]{Corollary}

\theoremstyle{definition}

\theoremstyle{remark}
\newtheorem{remark}[theorem]{Remark}
\newtheorem*{remark*}{Remark}

\numberwithin{equation}{section}

\newcommand\D{\displaystyle}

   \parindent 10pt 

\title[Stochastic LU, Darboux transforms and urn models]{Stochastic LU factorizations, Darboux transformations and urn models}

\author{F. Alberto Gr\"unbaum}
\address{F. Alberto Gr\"unbaum\\
Department of Mathematics. University of California, Berkeley. Berkeley, CA 94720  U.S.A.}
\email{grunbaum@math.berkeley.edu}
\date{\today}
\thanks{
}

\author{Manuel D. de la Iglesia}
\address{Manuel D. de la Iglesia\\
Instituto de Matem\'aticas, Universidad Nacional Aut\'onoma de M\'exico, Circuito Exterior, C.U., 04510, Ciudad de México, México.}
\email{mdi29@im.unam.mx}
\thanks{The work of both authors is supported by UC MEXUS-CONACYT grant CN-16-84. The work of the first author is also supported by PAPIIT-DGAPA-UNAM grant IA102617 (México), MTM2015-65888-C4-1-P (Ministerio de Economía y Competitividad, Spain) and FQM-262, FQM-7276 (Junta de Andaluc\'ia).}

\date{\today}

\subjclass[2010]{60J10, 60J60, 33C45, 42C05}

\keywords{Darboux transformations. LU factorizations. Orthogonal polynomials. Random walks. Urn models.}

\begin{document}

\maketitle

\begin{abstract}
We consider UL (and LU) decompositions of the one-step transition probability matrix of a random walk with state space the nonnegative integers, with the condition that both upper and lower triangular matrices in the factorization are also stochastic matrices. We give conditions on the free parameter of the UL factorization in terms of certain continued fraction such that this stochastic factorization is possible. By inverting the order of multiplication (also known as a Darboux transformation) we get a new family of random walks where it is possible the identify the spectral measures in terms of a Geronimus transformation. The same can be done for the LU factorization but now without a free parameter. Finally, we apply our results in two examples, the random walk with constant transition probabilities and the random walk generated by the Jacobi orthogonal polynomials. In both situations we give urn models associated with all the random walks in question.

\end{abstract}

\section{Introduction}

The main motivation of this paper was to find simple ways to describe certain random walks on the nonnegative integers in terms of urn models. The starting point was a paper written by one the authors (see \cite{G3}) where an urn model associated with Jacobi orthogonal polynomials was given. This was a rather contrived model compared to more familiar ones such as those of Ehrenfest and Bernoulli-Laplace, and we wondered if there was an alternative way of setting a less elaborate urn model.

The way we approach our goal in this paper is to divide the urn model associated with the random walk into two different and simpler urn experiments and combine them together to obtain a simpler description of the original urn model. For that we propose to factorize the one-step transition probability matrix $P$  of the random walk (which is a tridiagonal matrix, see \eqref{PP}) into its UL decomposition, i.e. $P=P_UP_L$. Here $P_L$ and $P_U$ are lower and upper triangular matrices (bidiagonal), respectively, and we want them to be \emph{stochastic}. As it is very well known this factorization is not unique and it comes with a \emph{free parameter}. The factorization, if it can be achieved in terms of stochastic factors, will represent a family of factorizations of the original transition matrix $P$. From a probabilistic point of view the meaning of this factorization will be the composition of a family of two random walks (one pure birth and another pure death), first the one associated with $P_U$ followed by the one associated with $P_L$. We will study the conditions on the free parameter mentioned above under which this stochastic UL factorization is possible and will relate it with the theory of continued fractions. We will see that this kind of stochastic factorizations will not always be possible for certain random walks. For instance, the symmetric random walk with constant transition probabilities will give one of these bad cases. The same can be done if we consider the LU decomposition but now the difference is that the stochastic factorization (if possible) is unique and does not come with a free parameter, as the UL decomposition. The stochastic LU decomposition will now be possible under certain boundary conditions on one of the transition probabilities of the random walk, also related with a continued fraction. This will be the content of Section \ref{sec2}. We mention that our factorization is in the same spirit of one that was exhibited in a much more elaborate problem involving matrix-valued orthogonal polynomials (see \cite{GPT3}). For the use of this kind of orthogonal polynomials in the study of stochastic processes, see \cite{DRSZ, G2}.

UL and LU decompositions of stochastic matrices have been considered earlier in the literature (see for instance \cite{Gr1, Gr2, Hey, Vig}). In \cite{Hey} a decomposition is found as $I-P=(A-I)(B-S)$, where $P$ is a stochastic matrix, $A$ is strictly upper triangular, $B$ is strictly lower triangular and $S$ is diagonal. The entries of $A$ (expected values) and the entries of $B$ and $S$ (probabilities) are related with certain Markov chains also known and censored Markov chains. In \cite{Vig} UL and LU factorizations are also considered of the form $I-P=(I-L)(I-K)$, where $L$ is upper triangular and $K$ is lower triangular. That means that $P=L+K-LK$. In this paper the author explores the relation between this factorization and the very well known Wiener-Hopf factorization for Markov chains, this last one as a particular case of the LU factorizations, up to Fourier transforms (see \cite{Vig} for more details). These factorizations seem to be different from the one we try to consider here of the form $P=P_UP_L$, where the three matrices involved are stochastic.

One of the main advantages of considering a factorization of the form $P=P_UP_L$ for tridiagonal stochastic matrices $P$ is that we can make use of the so-called \emph{discrete Darboux transformation}, consisting of inverting the order of multiplication. The new matrix $\widetilde P=P_LP_U$ will also be tridiagonal and stochastic. Since the factorization comes with a free parameter, we will have a family of new random walks different in general from the original one. These Darboux transformations have been studied before (see for instance \cite{GH, GHH, SZ, Yo, Ze}) in the context of the theory of orthogonal polynomials, in particular in the description of some families of Krall polynomials. It has played an important role in the study of integrable systems (see \cite{MS}). An important property of the Darboux transformation is that one knows how to relate the spectral measure associated with $P$ with the spectral measure associated with $\widetilde P$ using the so-called Geronimus transformation. For the LU factorization we have that these measures are related through the Christoffel transformation (see Section \ref{sec3}). A more general way of performing a discrete Darboux transformation consists of introducing a new parameter $\lambda\in\mathbb{C}$ such that we find a factorization of the form $P=AB+\lambda I$, where $A$ is upper triangular and $B$ is lower triangular (or $P=BA+\lambda I$ for the LU factorization). The case we study here is $\lambda=0$ and $A$ and $B$ stochastic matrices. 

Once we have the spectral measure it is easy to analyze the corresponding random walk in terms of the orthogonal polynomials associated with that measure. This was first done by a series of papers of S. Karlin and J. McGregor (inspired by work by W. Feller and H.P. McKean) in the 1950s (see \cite{KMc2, KMc3, KMc6}) where they studied first continuous-time birth-and-death processes and then the case of discrete-time random walks. After that many other authors have been working in this connection, such as E. van Doorn, M. Ismail, J. Letessier, G. Valent and H. Dette, to mention just a few. With the knowledge of the spectral measure and the corresponding orthogonal polynomials it is possible to give an explicit integral representation of the entries of the $n$-step transition probability matrix of the random walk (Karlin-McGregor formula), and to study the corresponding invariant measure vector (or distribution if the random walk is ergodic) in terms of the inverse of the norms of the orthogonal polynomials, and to study some other probabilistic properties like recurrence, absorbing times, first return times or limit theorems.

We will apply the results of this paper to two examples in Sections \ref{sec4} and \ref{sec5}. The first one is the random walk with constant transition probabilities and the second one is the random walk generated by Jacobi orthogonal polynomials. In both cases we show how to choose the free parameter of the UL factorization (or conditions for the LU factorization) such that we obtain a family of stochastic factorizations. In some cases we compute explicitly the coefficients of both stochastic factors. We also give the spectral measure associated with the random walks and explore the corresponding Darboux transformation, as well as other probabilistic properties. Finally we give urn models for some particular cases of both examples, exploring the probabilistic meaning of the free parameter in the UL factorization. These simplified urn models arising from the stochastic factorizations are among the main results in this paper.


\section{Stochastic LU and UL factorizations}\label{sec2}

Let $P$ be the transition probability matrix of an irreducible random walk with space state $\mathbb{Z}_{\geq0}$, given by
\begin{equation}
P=\begin{pmatrix}\label{PP}
b_0&a_0&0&0\\
c_1&b_1&a_1&0&\\
0&c_2&b_2&a_2&\\
&&\ddots&\ddots&\ddots
\end{pmatrix}.
\end{equation}
\vspace{-0.1cm}
As usual, since $P$ is stochastic, we have that all entries are nonnegative and
$$
b_0+a_0=1,\quad c_n+b_n+a_n=1,\quad n\geq1.
$$
The irreducibility conditions force us to take $0<a_n,c_{n+1}<1, n\geq0$. A diagram of the transitions between the states is given by

\vspace{0.7cm}

\begin{center}
$$\begin{psmatrix}[colsep=1.9cm]
  \cnode{.4}{0}& \cnode{.4}{1} & \cnode{.4}{2}& \cnode{.4}{3}& \cnode{.4}{4}& \cnode{.4}{5} &  \rnode{6}{\Huge{\cdots}} \\
\psset{nodesep=3pt,arcangle=15,labelsep=2ex,linewidth=0.3mm,arrows=->,arrowsize=1mm
3} \nccurve[angleA=160,angleB=200,ncurv=4]{0}{0}
\uput[u](0.95,1.95){a_0}\uput[u](2.85,1.95){a_1}\uput[u](4.75,1.95){a_2}\uput[u](6.65,1.95){a_3}\uput[u](8.55,1.95){a_4}\uput[u](10.65,1.95){a_5}
\uput[u](0.95,1.1){c_1}\uput[u](2.85,1.1){c_2}\uput[u](4.75,1.1){c_3}\uput[u](6.65,1.1){c_4}\uput[u](8.55,1.1){c_5}\uput[u](10.65,1.1){c_6}
\uput[u](-0.75,1.95){b_0}\uput[u](1.9,2.7){b_1}\uput[u](3.8,2.7){b_2}\uput[u](5.7,2.7){b_3}\uput[u](7.6,2.7){b_4}\uput[u](9.5,2.7){b_5}
\nccurve[angleA=70,angleB=110,ncurv=4]{1}{1}
\nccurve[angleA=70,angleB=110,ncurv=4]{2}{2}
\nccurve[angleA=70,angleB=110,ncurv=4]{3}{3}
\nccurve[angleA=70,angleB=110,ncurv=4]{4}{4}
\nccurve[angleA=70,angleB=110,ncurv=4]{5}{5}
 \ncarc{0}{1}\ncarc{1}{0} \ncarc{1}{2} \ncarc{2}{1}\ncarc{2}{3} \ncarc{3}{2}
\ncarc{3}{4} \ncarc{4}{3} \ncarc{4}{5}\ncarc{5}{4} \ncarc{5}{6} \ncarc{6}{5}
\psset{labelsep=-3.40ex}\nput{90}{0}{0}
\psset{labelsep=-3.40ex}\nput{90}{1}{1}
\psset{labelsep=-3.40ex}\nput{90}{2}{2}
\psset{labelsep=-3.40ex}\nput{90}{3}{3}
\psset{labelsep=-3.40ex}\nput{90}{4}{4}
\psset{labelsep=-3.40ex}\nput{90}{5}{5}
\end{psmatrix}
$$
\end{center}
\vspace{-1.2cm}

We would like to perform a UL decomposition of the matrix $P$ in the following way
\begin{equation}\label{Pxy}
P=\begin{pmatrix}
b_0&a_0&\\
c_1&b_1&a_1&\\
&\ddots&\ddots&\ddots
\end{pmatrix}=\begin{pmatrix}
y_0&x_0&\\
0&y_1&x_1&\\
&\ddots&\ddots&\ddots
\end{pmatrix}\begin{pmatrix}
s_0&0&\\
r_1&s_1&0&\\
&\ddots&\ddots&\ddots
\end{pmatrix}=P_UP_L,
\end{equation}
with the condition that $P_U$ and $P_L$ \emph{are also stochastic matrices}. This means that all entries of $P_U$ and $P_L$ are nonnegative and
$$
x_n+y_n=1,\quad n\geq0,\quad s_0=1,\quad r_n+s_n=1,\quad n\geq1.
$$
A direct computation shows that
\begin{align}
\nonumber a_n&=x_ns_{n+1},\quad n\geq0\\
\label{ULd}b_n&=x_nr_{n+1}+y_ns_n,\quad n\geq0,\\
\nonumber c_n&=y_nr_n,\quad n\geq1.
\end{align}
The irreducibility conditions force us to take $x_n,s_{n+1}>0, n\geq0$ and $y_n,r_{n}>0, n\geq1$. The free parameter $y_0$ satisfies $0\leq y_0<1$. Observe that $P_U$ is a pure birth random walk on $\mathbb{Z}_{\geq0}$ with diagram
\vspace{0.7cm}
\begin{center}
$$
\begin{psmatrix}[colsep=1.9cm]
  \cnode{.4}{0}& \cnode{.4}{1} & \cnode{.4}{2}& \cnode{.4}{3}& \cnode{.4}{4}& \cnode{.4}{5} &  \rnode{6}{\Huge{\cdots}} \\
\psset{nodesep=3pt,arcangle=15,labelsep=2ex,linewidth=0.3mm,arrows=->,arrowsize=1mm
3} \nccurve[angleA=160,angleB=200,ncurv=4]{0}{0}
\uput[u](0.95,1.95){x_0}\uput[u](2.85,1.95){x_1}\uput[u](4.75,1.95){x_2}\uput[u](6.65,1.95){x_3}\uput[u](8.55,1.95){x_4}\uput[u](10.65,1.95){x_5}
\uput[u](-0.75,1.95){y_0}\uput[u](1.9,2.7){y_1}\uput[u](3.8,2.7){y_2}\uput[u](5.7,2.7){y_3}\uput[u](7.6,2.7){y_4}\uput[u](9.5,2.7){y_5}
\nccurve[angleA=70,angleB=110,ncurv=4]{1}{1}
\nccurve[angleA=70,angleB=110,ncurv=4]{2}{2}
\nccurve[angleA=70,angleB=110,ncurv=4]{3}{3}
\nccurve[angleA=70,angleB=110,ncurv=4]{4}{4}
\nccurve[angleA=70,angleB=110,ncurv=4]{5}{5}
 \ncarc{0}{1}\ncarc{1}{2}\ncarc{2}{3}
\ncarc{3}{4}\ncarc{4}{5} \ncarc{5}{6}
\psset{labelsep=-3.40ex}\nput{90}{0}{0}
\psset{labelsep=-3.40ex}\nput{90}{1}{1}
\psset{labelsep=-3.40ex}\nput{90}{2}{2}
\psset{labelsep=-3.40ex}\nput{90}{3}{3}
\psset{labelsep=-3.40ex}\nput{90}{4}{4}
\psset{labelsep=-3.40ex}\nput{90}{5}{5}
\end{psmatrix}
$$
\end{center}
\vspace{-1.2cm}
while $P_L$ is a pure death random walk on $\mathbb{Z}_{\geq0}$ with absorbing state at 0 with diagram
\vspace{0.7cm}
\begin{center}
$$\begin{psmatrix}[colsep=1.9cm]
  \cnode{.4}{0}& \cnode{.4}{1} & \cnode{.4}{2}& \cnode{.4}{3}& \cnode{.4}{4}& \cnode{.4}{5} &  \rnode{6}{\Huge{\cdots}} \\
\psset{nodesep=3pt,arcangle=15,labelsep=2ex,linewidth=0.3mm,arrows=->,arrowsize=1mm
3} \nccurve[angleA=160,angleB=200,ncurv=4]{0}{0}
\uput[u](0.95,1.1){r_1}\uput[u](2.85,1.1){r_2}\uput[u](4.75,1.1){r_3}\uput[u](6.65,1.1){r_4}\uput[u](8.55,1.1){r_5}\uput[u](10.65,1.1){r_6}
\uput[u](-0.75,1.95){1}\uput[u](1.9,2.7){s_1}\uput[u](3.8,2.7){s_2}\uput[u](5.7,2.7){s_3}\uput[u](7.6,2.7){s_4}\uput[u](9.5,2.7){s_5}
\nccurve[angleA=70,angleB=110,ncurv=4]{1}{1}
\nccurve[angleA=70,angleB=110,ncurv=4]{2}{2}
\nccurve[angleA=70,angleB=110,ncurv=4]{3}{3}
\nccurve[angleA=70,angleB=110,ncurv=4]{4}{4}
\nccurve[angleA=70,angleB=110,ncurv=4]{5}{5}
\ncarc{1}{0} \ncarc{2}{1}\ncarc{3}{2}
\ncarc{4}{3} \ncarc{5}{4} \ncarc{6}{5}
\psset{labelsep=-3.40ex}\nput{90}{0}{0}
\psset{labelsep=-3.40ex}\nput{90}{1}{1}
\psset{labelsep=-3.40ex}\nput{90}{2}{2}
\psset{labelsep=-3.40ex}\nput{90}{3}{3}
\psset{labelsep=-3.40ex}\nput{90}{4}{4}
\psset{labelsep=-3.40ex}\nput{90}{5}{5}
\end{psmatrix}
$$
\end{center}
\vspace{-1.2cm}
One could have performed the factorization the other way around like
\begin{equation}\label{Pxy2}
P=\begin{pmatrix}
b_0&a_0&\\
c_1&b_1&a_1&\\
&\ddots&\ddots&\ddots
\end{pmatrix}=\begin{pmatrix}
\tilde s_0&0&\\
\tilde r_1&\tilde s_1&0&\\
&\ddots&\ddots&\ddots
\end{pmatrix}\begin{pmatrix}
\tilde y_0&\tilde x_0&\\
0&\tilde y_1&\tilde x_1&\\
&\ddots&\ddots&\ddots
\end{pmatrix}=\widetilde P_L\widetilde P_U,
\end{equation}
in which case we have a LU factorization with relations
\begin{align*}
a_n&=\tilde s_{n}\tilde x_n,\quad n\geq0\\
b_n&=\tilde r_{n}\tilde x_{n-1}+\tilde s_n\tilde y_n,\quad n\geq0,\\
c_n&=\tilde r_n\tilde y_{n-1},\quad n\geq1.
\end{align*}
As we see, the important difference between both cases is that in the UL factorization case there will be a \emph{free parameter} $y_0$ while in the LU factorization case the decomposition will be unique. As before, the irreducibility conditions force us to take $\tilde x_n,\tilde s_{n}>0, n\geq0$ and $\tilde y_n,\tilde r_{n+1}>0, n\geq0$.


\begin{lemma}
Let $P=P_UP_L$ like in \eqref{Pxy}. Then $P_U$ is stochastic if and only if $P_L$ is stochastic. The same result holds for the LU decomposition \eqref{Pxy2}.
\end{lemma}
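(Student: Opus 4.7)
My plan exploits the row-sum identity $P\mathbf{1}=\mathbf{1}$ (with $\mathbf{1}$ the all-ones column vector) encoding the stochasticity of $P$, combined with the factorization relations \eqref{ULd}.

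For the direction ``$P_L$ stochastic $\Rightarrow$ $P_U$ stochastic'', the argument is immediate: from $P_L\mathbf{1}=\mathbf{1}$ one computes
\[ P_U\mathbf{1} = (P_UP_L)\mathbf{1} = P\mathbf{1} = \mathbf{1}, \]
which gives $x_n+y_n=1$ for every $n\ge 0$. Non-negativity of the entries of $P_U$ is read off \eqref{ULd}: since $a_n>0$ and $s_{n+1}>0$ by irreducibility, $x_n=a_n/s_{n+1}>0$, and likewise $y_n=c_n/r_n>0$ for $n\ge 1$, while $y_0=1-x_0\in[0,1]$ is automatic.

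For the converse, I would expand the row-sum identity of $P$ with the help of \eqref{ULd} into
\[ x_n\,\sigma_{n+1} + y_n\,\sigma_n = 1,\qquad n\ge 0, \]
writing $\sigma_0:=s_0$ and $\sigma_n:=r_n+s_n$ for $n\ge 1$. Assuming $P_U$ stochastic, subtract the identity $x_n+y_n=1$ to obtain the first-order recurrence
\[ \delta_{n+1} = -\frac{y_n}{x_n}\,\delta_n,\qquad \delta_n:=\sigma_n-1,\quad n\ge 0. \]
Because the zeroth row of $P_L$ reduces to the single entry $s_0$, the stochasticity of $P_L$ amounts to $s_0=1$ (equivalently $\delta_0=0$) together with $r_n+s_n=1$ for $n\ge 1$. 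The recurrence shows these two packages of conditions are equivalent, so once the boundary value $\delta_0=0$ is secured by the normalization of the first row of $P_L$, the identity $\delta_n=0$ propagates for all $n$; non-negativity of $r_n, s_n$ is inherited from \eqref{ULd}.

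The delicate point is the handling of the boundary $s_0$: a priori the factorization carries both $y_0$ and $s_0$ as free parameters, and it is the normalization $s_0=1$ (implicit in $P_L$ being a lower-triangular stochastic matrix) that cuts the setup down to the one-parameter family advertised after \eqref{Pxy}. The LU case \eqref{Pxy2} is handled by the same scheme applied to the transposed structure; the absence of a free parameter there simply rigidifies the analogous propagation.
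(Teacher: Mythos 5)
Your argument is correct and is essentially the paper's own proof in different packaging: the recurrence $\delta_{n+1}=-(y_n/x_n)\,\delta_n$ with $\delta_0=s_0-1=0$ is exactly the paper's induction showing $s_{n+1}+r_{n+1}=1$ from $s_n+r_n=1$, and the identity $P_U\mathbf{1}=(P_UP_L)\mathbf{1}=P\mathbf{1}=\mathbf{1}$ is the paper's summation of the three relations in \eqref{ULd}. The one overstatement is the claim that $y_0=1-x_0\in[0,1]$ is ``automatic'': nonnegativity of $y_0$ does \emph{not} follow from $P_L$ being stochastic (it is precisely the constraint $0\le y_0\le H$ analyzed later in Proposition \ref{lemcf}), though the paper's own proof is equally terse on this point, so your proposal matches it in both substance and level of rigor.
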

\begin{proof}
Assume that $P_U$ is stochastic. We will prove that $P_L$ is stochastic by induction. 
If $x_0+y_0=1$ then, using the first and second relation in \eqref{ULd}, we get
$$
s_1+r_1=\frac{a_0}{x_0}+\frac{b_0-y_0}{x_0}=\frac{1-y_0}{x_0}=1.
$$
Assume now that $s_n+r_n=1$. Then we have, using again \eqref{ULd}, that
\begin{align*}
s_{n+1}+r_{n+1}&=\frac{a_n}{x_n}+\frac{b_n-y_ns_n}{x_n}=\frac{1}{x_n}\left(a_n+b_n-y_n(1-r_n)\right)\\
&=\frac{1}{x_n}\left(a_n+b_n+c_n-y_n\right)=\frac{1}{x_n}\left(1-y_n\right)=1.
\end{align*}
Finally, the entries of $P_L$ are all nonnegative following \eqref{ULd}. On the other hand, if $P_L$ is stochastic, then all entries of $P_U$ are nonnegative and adding up the three relations in \eqref{ULd} we have
$$
x_n(s_{n+1}+r_{n+1})+y_n(s_n+r_n)=a_n+b_n+c_n=1.
$$
The proof for the LU decomposition is similar.
\end{proof}
From \eqref{ULd} it is clear that the we can generate all the sequences $x_n, y_n, s_n, r_n$ directly from $a_n, b_n, c_n$ and there will be an unique \emph{free parameter} $y_0$ (something that it will not be true for the LU decomposition). Indeed, we first calculate alternatively the sequences $s_{n+1},n\geq0,$ and $y_{n},n\geq1,$ by using the first and third relation in \eqref{ULd}. The sequences $x_n$ and $r_n$ satisfy $x_n=1-y_n, n\geq0,$ and $r_n=1-s_n,n\geq1$. The best order in which all sequences are calculated is ($y_0$ is a free parameter and $s_0=1$), $s_1,r_1,y_1,x_1,s_2,r_2,y_2,x_2,\ldots$. Another way to generate these sequences will be by using certain recurrence relations as the next lemma shows.

\begin{lemma}\label{lem22}
Let $x_n, y_n, s_n, r_n$ the sequences obtained by \eqref{Pxy} with $P_U$ having all its rows summing up to 1 (or $P_L$). Then the sequence $y_n$ satisfies the recurrence relation ($y_0$ is a free parameter)
\begin{equation}\label{recy}
y_{n+1}=\frac{c_{n+1}}{1-\D\frac{a_n}{1-y_n}},\quad n\geq0,\quad y_0\in\mathbb{R},
\end{equation}
while the sequence $s_n$ satisfies the recurrence relation
\begin{equation}\label{recs}
s_{n+1}=\frac{a_{n}}{1-\D\frac{c_n}{1-s_n}},\quad n\geq1,\quad s_1=\frac{a_0}{1-y_0}.
\end{equation}
\end{lemma}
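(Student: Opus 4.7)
The plan is to derive both recurrences by straightforward algebraic manipulation of the three relations in \eqref{ULd}, using the stochasticity identities $x_n=1-y_n$ (for $n\geq 0$) and $r_n=1-s_n$ (for $n\geq 1$) that were established in the previous lemma.

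First I would use the first relation $a_n=x_ns_{n+1}=(1-y_n)s_{n+1}$ to solve for $s_{n+1}$, obtaining
\[
s_{n+1}=\frac{a_n}{1-y_n},\qquad n\geq 0.
\]
In particular, setting $n=0$ immediately gives the initial condition $s_1=a_0/(1-y_0)$ in \eqref{recs}. Next, from the third relation $c_{n+1}=y_{n+1}r_{n+1}=y_{n+1}(1-s_{n+1})$ I would solve for $y_{n+1}$ to get
\[
y_{n+1}=\frac{c_{n+1}}{1-s_{n+1}},\qquad n\geq 0,
\]
and substituting the expression for $s_{n+1}$ found above yields exactly the recurrence \eqref{recy}.

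For the second recurrence I would reverse the roles: express $y_n$ from the third relation as $y_n=c_n/(1-s_n)$ for $n\geq 1$, and plug this into the already-derived identity $s_{n+1}=a_n/(1-y_n)$. This produces
\[
s_{n+1}=\frac{a_n}{1-\dfrac{c_n}{1-s_n}},\qquad n\geq 1,
\]
which is \eqref{recs}. Note that the middle relation $b_n=x_nr_{n+1}+y_ns_n$ is not needed: it is automatically satisfied once the rows of $P_U$ (equivalently $P_L$) sum to one, by the preceding lemma.

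There is no real obstacle here; the only subtle point is to verify that the denominators $1-y_n$ and $1-s_n$ never vanish along the recursion, which follows from the irreducibility assumptions $x_n=1-y_n>0$ and $r_n=1-s_n>0$ recorded just after \eqref{ULd}. Consequently the recurrences are well-defined for all $n$ in the stated ranges.
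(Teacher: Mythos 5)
Your proof is correct and follows essentially the same route as the paper: both derive the recurrences by combining the first and third relations of \eqref{ULd} with the stochasticity identities $x_n=1-y_n$ and $r_n=1-s_n$, and then substituting one expression into the other. Your added remarks on the nonvanishing denominators and the redundancy of the middle relation are fine but not needed beyond what the paper records.
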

\begin{proof}
From the first and third relation in \eqref{ULd} we have
\begin{align*}
y_{n+1}&=\frac{c_{n+1}}{r_{n+1}}=\frac{c_{n+1}}{1-s_{n+1}}=\frac{c_{n+1}}{1-\D\frac{a_n}{x_n}}=\frac{c_{n+1}}{1-\D\frac{a_n}{1-y_n}},\\
s_{n+1}&=\frac{a_{n}}{x_n}=\frac{a_{n}}{1-y_n}=\frac{a_{n}}{1-\D\frac{c_n}{1-s_n}}.
\end{align*}

\end{proof} 
\begin{remark}
A simpler way of writing \eqref{recy} and \eqref{recs} is calculating alternatively both sequences by using
\begin{equation}\label{recys}
y_{n+1}=\frac{c_{n+1}}{1-s_{n+1}},\quad s_{n+1}=\frac{a_n}{1-y_n},\quad n\geq0.
\end{equation}
\end{remark}
\begin{remark}
For the LU decomposition we have similar recurrence relations, but now for $\tilde r_n$ and $\tilde x_n$. Indeed,
\begin{equation}\label{recr}
\tilde r_{n+1}=\frac{c_{n+1}}{1-\D\frac{a_n}{1-\tilde r_n}}=\frac{c_{n+1}}{1-\tilde x_n},\quad n\geq0,\quad \tilde r_0=0,
\end{equation}
and
\begin{equation}\label{recx}
\tilde x_{n+1}=\frac{a_{n+1}}{1-\D\frac{c_{n+1}}{1-\tilde x_n}}=\frac{a_{n+1}}{1-\tilde r_{n+1}},\quad n\geq0,\quad \tilde x_0=a_0.
\end{equation}
Unlike the UL decomposition, there is no free parameter in this case.
\end{remark}
The previous lemma gives one way to obtain recursively all coefficients $x_n, y_n, s_n, r_n$ in terms of a free parameter $y_0$. Lemma \ref{lem22} does not say anything about the positivity of the coefficients $x_n, y_n, s_n, r_n$. This will be the goal of the next proposition, where we will study how to choose appropriately the parameter $y_0$ in such a way that $P_U$ and $P_L$ are both stochastic. Before that, we need to introduce some notation about \emph{continued fractions}. We recommend the reference \cite{Wa} for the reader unfamiliar with the subject.

Let $H$ be the continued fraction generated by alternatively choosing $a_n$ and $c_n$, i.e.
\begin{equation}\label{cfUL}
H=1-\cfrac{a_0}{1-\cfrac{c_1}{1-\cfrac{a_1}{1-\cfrac{c_2}{1-\cdots}}}}.
\end{equation}
Continued fractions admit different notations. In this paper we will also use
\begin{equation*}
\mbox{$H=\contFrac*{1 // a_0 // 1 // c_1 // 1 // a_1 // 1 // c_2 // \cdots}$}.
\end{equation*}
The sequence $a_0,c_1,a_1,c_2,\ldots$ is called the sequence of \emph{partial numerators} of the continued fraction. Consider the so-called convergents $(h_n)_{n\geq0}$ of the continued fraction $H$, given by the sequence of truncated continued fractions of $H$, i.e.
\begin{align*}
\mbox{$h_{2n}=\contFrac*{1 // a_0 // 1 // c_1// 1}-\cdots\contFrac*{// c_n // 1}$},\quad n\geq0,\\
\mbox{$h_{2n+1}=\contFrac*{1 // a_0 // 1 // c_1// 1}-\cdots\contFrac*{// a_n // 1}$},\quad n\geq0.
\end{align*}
It is well known (see \cite{Wa}) that the convergents of a continued fraction can be written in the form
\begin{equation}\label{hhh}
h_n=\frac{A_n}{B_n},
\end{equation}
where the numbers $A_n, B_n,$ can be calculated recursively by the formulas
\begin{align*}
A_n&=A_{n-1}-\xi_nA_{n-2},\quad A_{-1}=1,\quad A_0=1,\quad n\geq1,\\
B_n&=B_{n-1}-\xi_nB_{n-2},\quad B_{-1}=0,\quad B_0=1,\quad n\geq1.
\end{align*}
Here $\xi_n$ is the sequence given by $\xi_{2n}=c_n$ and $\xi_{2n+1}=a_n$ for $n\geq1$. In terms of $h_{2n}$ and $h_{2n+1}$ we have
\begin{align*}
A_{2n}&=A_{2n-1}-c_nA_{2n-2},\quad n\geq1,\\
B_{2n}&=B_{2n-1}-c_nB_{2n-2},\quad n\geq1.
\end{align*}
and
\begin{align*}
A_{2n+1}&=A_{2n}-a_nA_{2n-1},\quad n\geq0,\\
B_{2n+1}&=B_{2n}-a_nB_{2n-1},\quad n\geq0.
\end{align*}
Using the relations above it is not hard to prove that
\begin{align*}
A_{2n+2}B_{2n+1}-A_{2n+1}B_{2n+2}&=-a_0c_1a_1c_2\cdots a_{n-1}c_n,\quad n\geq0,\\
A_{2n+1}B_{2n}-A_{2n}B_{2n+1}&=-a_0c_1a_1c_2\cdots c_na_n,\quad n\geq0,
\end{align*}
or, in other words,
\begin{align}
\label{hhn1}h_{2n+2}-h_{2n+1}&=\frac{A_{2n+2}}{B_{2n+2}}-\frac{A_{2n+1}}{B_{2n+1}}=-\frac{a_0c_1a_1c_2\cdots a_{n-1}c_n}{B_{2n+1}B_{2n+2}},\quad n\geq0,\\
\label{hhn2}h_{2n+1}-h_{2n}&=\frac{A_{2n+1}}{B_{2n+1}}-\frac{A_{2n}}{B_{2n}}=-\frac{a_0c_1a_1c_2\cdots c_na_n}{B_{2n}B_{2n+1}},\quad n\geq0.
\end{align}
From here we get the following:

\begin{proposition}\label{lemcf}
Let $H$ the continued fraction given by \eqref{cfUL} and the corresponding convergents $h_n=A_n/B_n$ defined by \eqref{hhh}. Assume that 
\begin{equation}\label{AnBn}
0<A_n<B_n,\quad n\geq1.
\end{equation}
Then $H$ is convergent. Moreover, let $P=P_UP_L$ like in \eqref{Pxy}. Then, both $P_U$ and $P_L$ are stochastic matrices if and only if we choose $y_0$ in the following range
\begin{equation}\label{yy0r}
0\leq y_0\leq H.
\end{equation}
\end{proposition}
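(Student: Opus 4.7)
My approach is to derive an explicit closed-form Möbius expression for $y_n$ in terms of the free parameter $y_0$ and the convergent numerators $A_k$, denominators $B_k$, and then translate the stochasticity requirements for $P_U$ and $P_L$ into inequalities of the form $y_0 < h_k$. As a preliminary observation, the hypothesis $0 < A_n < B_n$ forces $B_n > 0$ and $h_n = A_n/B_n \in (0,1)$; identities \eqref{hhn1} and \eqref{hhn2} then show that both consecutive differences $h_{n+1} - h_n$ are strictly negative, so $(h_n)$ is strictly monotonically decreasing and bounded below by $0$. Hence $H = \lim_n h_n$ exists, lies in $[0,1)$, and satisfies $H < h_k$ strictly for every $k \geq 1$.

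The core step is to prove by induction on $n \geq 1$ that
\[
y_n \;=\; c_n\,\frac{A_{2n-2} - B_{2n-2}\,y_0}{A_{2n-1} - B_{2n-1}\,y_0}.
\]
The base case $n = 1$ is just $y_1 = c_1(1-y_0)/(b_0 - y_0)$, which matches since $A_0 = B_0 = 1$, $A_1 = b_0$, $B_1 = 1$. For the inductive step, substitute the assumed expression into $y_{n+1} = c_{n+1}(1-y_n)/(1 - a_n - y_n)$; using the three-term recursions $A_{2n} = A_{2n-1} - c_n A_{2n-2}$, $A_{2n+1} = A_{2n} - a_n A_{2n-1}$ (and the analogues for $B_k$) one obtains
\[
1 - y_n \;=\; \frac{A_{2n} - B_{2n}\,y_0}{A_{2n-1} - B_{2n-1}\,y_0}, \qquad 1 - a_n - y_n \;=\; \frac{A_{2n+1} - B_{2n+1}\,y_0}{A_{2n-1} - B_{2n-1}\,y_0},
\]
and the common denominator cancels to give the formula at level $n+1$.

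With the closed form in hand, the stochasticity conditions translate directly: we need $0 < y_n$ and $s_{n+1} = a_n/(1-y_n) \leq 1$, equivalently $0 < y_n \leq 1 - a_n$, for every $n \geq 0$. Under $y_0 < h_{2n-1}$ (which makes the denominator in the closed form positive) the inequality $y_n \leq 1 - a_n$ simplifies, via the same $A_k, B_k$ recursions, to $A_{2n+1} - B_{2n+1}\,y_0 > 0$, i.e.\ $y_0 < h_{2n+1}$. Since the $h_k$ are strictly decreasing, the bound $y_0 < h_{2n+1}$ already implies $y_0 < h_{2n-1}$ and $y_0 < h_{2n-2}$, so the single family $y_0 < h_{2n+1}$ for all $n \geq 0$ captures stochasticity. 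Intersecting these gives $y_0 \leq \inf_k h_k = H$ (equality is allowed because $H < h_{2n+1}$ strictly), while for $y_0 > H$ one has $h_{2n+1} < y_0$ eventually, forcing $y_n > 1 - a_n$ and hence $s_{n+1} > 1$, breaking stochasticity of $P_L$. Combined with $y_0 \geq 0$ this yields the claimed range $0 \leq y_0 \leq H$.

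The main obstacle is the algebraic bookkeeping in the inductive step: verifying that the three-term recursions for $A_k$ and $B_k$ correctly clock with the two-step recurrence $(s_{n+1}, y_{n+1})$, so that $(1-a_n)A_{2n-1} - c_nA_{2n-2} = A_{2n+1}$ and its $B$-analogue produce the right advancing formula. Once that identity is in place, the rest is a direct translation between positivity of the ratios $y_n$ and the ordering $y_0 < h_k$ of convergents.
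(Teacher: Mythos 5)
Your proof is correct, and it follows the same skeleton as the paper's: both arguments reduce stochasticity of $P_U$ and $P_L$ to the family of inequalities ``$y_0<h_k$ for all $k$'' and then invoke the strict monotone decrease of the convergents (from \eqref{AnBn}, \eqref{hhn1}, \eqref{hhn2}) to conclude $y_0\le H$, with the endpoint $y_0=H$ admissible precisely because $H<h_k$ strictly for every $k$. The one genuine difference is in how that reduction is carried out: the paper unwinds the recursion \eqref{recys} one inequality at a time, showing $\alpha_n<1\Leftrightarrow y_0<h_n$ through a chain of equivalences written as nested truncated continued fractions, whereas you prove by induction the explicit M\"obius formula $y_n=c_n\,(A_{2n-2}-B_{2n-2}y_0)/(A_{2n-1}-B_{2n-1}y_0)$ and read the inequalities off from the signs of $A_k-B_ky_0$. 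Your closed form is the standard linear-fractional representation of a continued fraction's tail and is arguably cleaner: it makes the equivalence $s_{n+1}<1\Leftrightarrow y_0<h_{2n+1}$ a one-line sign computation and gives a transparent necessity argument (the first index $k$ with $y_0\ge h_k$ produces $y_n\ge1$ or $s_{n+1}\ge1$). The minor slips --- writing $0<y_0$ where the statement allows $y_0=0$, and using $\le$ where the irreducibility conditions demand strict inequalities --- are immaterial to the conclusion, since $h_k\downarrow H$ strictly means the non-strict and strict versions of the intersection both yield \eqref{yy0r}.
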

\begin{proof}
First, following the assumptions \eqref{AnBn} and properties \eqref{hhn1} and \eqref{hhn2}, we have that (recall that $0<a_n,c_{n+1}<1, n\geq0$)
$$
0<\cdots<h_{2n+2}<h_{2n+1}<h_{2n}<\cdots<h_1<h_0=1.
$$
Therefore $h_n$ is a bounded strictly decreasing sequence, so it is convergent to $H$. 

Assume first that both $P_U$ and $P_L$ are stochastic. That means that $0\leq y_0<1$, $0<x_n,s_{n+1}<1, n\geq0$ and $0<y_n,r_{n}<1, n\geq1$, following the irreducibility conditions for the matrix $P$. Therefore, we already have the lower bound for $y_0$, i.e. $0\leq y_0$. In order to proof that $y_0\leq H$ we will use the definition of $y_n$ and $s_n$ in \eqref{recys}. Consider the sequence of numbers $\alpha_n,n\geq0,$ given by $\alpha_{2n}=y_n,\alpha_{2n+1}=s_{n+1},n\geq0$. We will see that the condition $\alpha_n<1$ implies that $y_0<h_n$ for every $n$. Therefore, since $h_n$ is a positive bounded and decreasing sequence, we will have \eqref{yy0r}. 

For $\alpha_0=y_0$ it is clear that $0\leq y_0<1=h_0$. For $\alpha_1=s_1$, we have, by definition,
$$
\alpha_1=s_1=\frac{a_0}{1-y_0}<1\Leftrightarrow y_0<1-a_0=h_1.
$$
For $\alpha_2=y_1$ we have, using \eqref{recys} and the previous bound, that
$$
\alpha_2=y_1=\frac{c_1}{1-s_1}<1\Leftrightarrow s_1<1-c_1\Leftrightarrow \frac{a_0}{1-y_0}<1-c_1\Leftrightarrow y_0<1-\frac{a_0}{1-c_1}=h_2.
$$
For an even index $2n$ we have, using \eqref{recys} and all the previous bounds, that
\begin{align*}
\alpha_{2n}&=y_{n}=\frac{c_n}{1-s_n}<1\Leftrightarrow s_n<1-c_n\Leftrightarrow \frac{a_{n-1}}{1-y_{n-1}}<1-c_n\Leftrightarrow y_{n-1}<1-\frac{a_{n-1}}{1-c_n}\\
&\Leftrightarrow s_{n-1}<1-\frac{c_{n-1}}{1-\D\frac{a_{n-1}}{1-c_n}}\Leftrightarrow \cdots \Leftrightarrow y_0<\mbox{$\contFrac*{1 // a_0 // 1 // c_1// 1}-\cdots\contFrac*{// c_n // 1}=h_{2n}$}.
\end{align*}
Similarly, for an odd index $2n+1$ we have, using \eqref{recys} and all the previous bounds, that
\begin{align*}
\alpha_{2n+1}&=s_{n+1}=\frac{a_n}{1-y_n}<1\Leftrightarrow y_n<1-a_n\Leftrightarrow \frac{c_{n}}{1-s_n}<1-a_n\Leftrightarrow s_{n}<1-\frac{c_{n}}{1-a_n}\\
&\Leftrightarrow y_{n-1}<1-\frac{a_{n-1}}{1-\D\frac{c_{n}}{1-a_n}}\Leftrightarrow \cdots \Leftrightarrow y_0<\mbox{$\contFrac*{1 // a_0 // 1 // c_1// 1}-\cdots\contFrac*{// a_n // 1}=h_{2n+1}$}.
\end{align*}
On the contrary, if \eqref{yy0r} holds, in particular we have that $0\leq y_0\leq H<h_n$ for every $n\geq0$. Following the same steps as before, using an argument of strong induction will lead us to the fact that both $P_U$ and $P_L$ are stochastic matrices with the conditions that $0\leq y_0<1$, $0<x_n,s_{n+1}<1, n\geq0$ and $0<y_n,r_{n}<1, n\geq1$. 
\end{proof}

\begin{remark}\label{remLU}
For the LU decomposition there is no free parameter, so the positivity condition comes in terms of an upper bound of the coefficient $\tilde y_0=a_0$. Indeed, one must have 
$$0\leq a_0\leq \widetilde H,$$ where
\begin{equation}\label{cfLU}
\mbox{$\widetilde H=\contFrac*{1 // c_1 // 1 // a_1 // 1 // c_2 // 1 // a_2 // \cdots}$},
\end{equation}
as long as we have $0<\widetilde A_n<\widetilde B_n,n\geq1,$ where $\tilde h_n=\widetilde A_n/\widetilde B_n$ are the convergents of $\widetilde H$. The proof is similar to the one in the previous proposition, using now the recurrence relations \eqref{recr} and \eqref{recx}.
\end{remark}

\begin{remark}\label{remCS}
The subject of calculating the exact value of a continued fraction or establishing its convergence is a delicate matter. There are many results allowing one to decide whether a continued fraction is convergent or not. We refer again to \cite{Wa} to find a collection of convergence results for different types of continued fractions. For instance, one of the oldest results is the \emph{Worpitzky's Theorem}, which states that a continued fraction converges if all partial numerators have moduli less than $1/4$. Another interesting case is given if the sequence of partial numerators is a \emph{chain sequence} (see for instance Theorem 3.1 of \cite{Ch}). Indeed, consider the continued fraction 
\begin{equation*}
\mbox{$C=\contFrac*{1 // \alpha_1 // 1 // \alpha_2 // 1 // \alpha_3 // 1 // \alpha_4 // \cdots}$}.
\end{equation*}
Assume that $\alpha_n$ can be written as $\alpha_n=(1-m_{n-1})m_n$, where $0\leq m_0<1$ and $0<m_n<1$ for $n\geq1$. Then
$$
C=m_0+\frac{1-m_0}{1+L},
$$
where
\begin{equation}\label{LL}
L=\sum_{n=1}^{\infty}\frac{m_1m_2\cdots m_n}{(1-m_1)(1-m_2)\cdots(1-m_n)}.
\end{equation}
Moreover, in that case the convergents satisfy the condition \eqref{AnBn} of the Proposition \ref{lemcf}. Indeed, following Section 3.3 of \cite{Ch}, it is possible to see that the convergents $C_n=A_n/B_n$ of $C$ satisfy
\begin{align*}
A_n&=\prod_{k=1}^n(1-m_k)+m_0\left(\sum_{k=1}^{n-1}m_1\cdots m_k(1-m_{k+1})\cdots (1-m_n)+\prod_{k=1}^nm_k\right)\\
B_n&=\prod_{k=1}^n(1-m_k)+\sum_{k=1}^{n-1}m_1\cdots m_k(1-m_{k+1})\cdots (1-m_n)+\prod_{k=1}^nm_k.
\end{align*}
From here it is clear that $0<A_n<B_n, n\geq 1$. The argument above will be used for the Jacobi polynomials in Section \ref{sec5}.
\end{remark}

\begin{remark}\label{remy0}
Observe that the case $y_0=0$ allows for UL decomposition with a \emph{free parameter $s_0$} not necessarily equal to 1. Indeed, $x_0=1$, the sequences $y_n, x_n,n\geq1,$ can be obtained by \eqref{recy} and they are independent of $s_0$. The same is true for the sequences $r_n, s_n,n\geq1,$ which can be obtained by \eqref{recs} and they are independent of $s_0$. Therefore we can take any $s_0$ such that $0\leq s_0\leq1$. The probabilistic meaning of taking $y_0=0$ is that the random walk $P_U$ has a repelling barrier at state $0$. For the random walk $P_L$ it is convenient to think of an ignored state (say $-1$) as an absorbing state from the state $0$ with probability $1-s_0$.
\end{remark}

\section{Stochastic Darboux transformations}\label{sec3}

Now that we know the conditions under which a stochastic tridiagonal matrix can be decomposed as a UL (or LU) factorization where both factors are still stochastic matrices, we can perform what is called a \emph{discrete Darboux transformation}. The Darboux transformation has a long history but probably the first reference of a discrete Darboux transformation like we study here appeared in \cite{MS} in connection with the Toda lattice. We explain now what is a Darboux transformation in our context.

If $P=P_UP_L$ as in \eqref{Pxy}, then by inverting the order of multiplication we obtain another tridiagonal matrix of the form
\begin{equation}\label{DarbT}
\widetilde{P}=P_LP_U=\begin{pmatrix}
s_0&0&\\
r_1&s_1&0&\\
&\ddots&\ddots&\ddots
\end{pmatrix}\begin{pmatrix}
y_0&x_0&\\
0&y_1&x_1&\\
&\ddots&\ddots&\ddots
\end{pmatrix}=\begin{pmatrix}
\tilde{b}_0&\tilde{a}_0&\\
\tilde{c}_1&\tilde{b}_1&\tilde{a}_1&\\
&\ddots&\ddots&\ddots
\end{pmatrix}.
\end{equation}
Now the new coefficients are given by
\begin{align*}
\nonumber\tilde{a}_n&=s_nx_{n},\quad n\geq0,\\
\tilde{b}_n&=r_nx_{n-1}+s_ny_n,\quad n\geq0,\\
\nonumber\tilde{c}_n&=r_ny_{n-1},\quad n\geq1.
\end{align*}
The matrix $\widetilde{P}$ is actually stochastic, since the multiplication of two stochastic matrices is again a stochastic matrix. Therefore it gives a \emph{family} of new random walks with coefficients $(\tilde{a}_n)_n$, $(\tilde{b}_n)_n$ and $(\tilde{c}_n)_n$ and depending on a free parameter $y_0$. In terms of a model driven by urn experiments (as we will see in the example below) the factorization $P=P_UP_L$ may be thought as two urn experiments, Experiment 1 and Experiment 2, respectively. We first perform the Experiment 1 and with the result we immediately perform the Experiment 2. The urn model for $\widetilde{P}=P_LP_U$ will proceed in the reversed order, first the Experiment 2 and with the result the Experiment 1. We will see more on this later for some specific examples.

The same can be done for the LU decomposition \eqref{Pxy2} of the form $P=\widetilde P_L\widetilde P_U$ but now we do not get a family of random walks and the transformation is unique. The corresponding Darboux transformation is
\begin{equation}\label{DarbT2}
\widehat{P}=\widetilde P_U\widetilde P_L=\begin{pmatrix}
\tilde y_0&\tilde x_0&\\
0&\tilde y_1&\tilde x_1&\\
&\ddots&\ddots&\ddots
\end{pmatrix}\begin{pmatrix}
\tilde s_0&0&\\
\tilde r_1&\tilde s_1&0&\\
&\ddots&\ddots&\ddots
\end{pmatrix}=\begin{pmatrix}
\hat{b}_0&\hat{a}_0&\\
\hat{c}_1&\hat{b}_1&\hat{a}_1&\\
&\ddots&\ddots&\ddots
\end{pmatrix}.
\end{equation}
The new coefficients are given by
\begin{align*}
\hat{a}_n&=\tilde x_{n}\tilde s_{n+1},\quad n\geq0,\\
\hat{b}_n&=\tilde x_{n}\tilde r_{n+1}+\tilde y_n\tilde s_n,\quad n\geq0,\\
\hat{c}_n&=\tilde y_{n}\tilde r_n,\quad n\geq1.
\end{align*}

\medskip

One important property of the Darboux transformation is how to transform the spectral measure associated with a random walk with one-step transition probability $P$. It is very well known (see \cite{KMc6}) that for every tridiagonal stochastic matrix $P$ (also known as a Jacobi matrix) there exists an unique positive measure $\omega$ defined on the interval $-1\leq x\leq1$. This is just a consequence of the very well known Spectral Theorem (or Favard's Theorem in the context of orthogonal polynomials). The Darboux transformation \eqref{DarbT} gives a family of random walks $\widetilde P$ which is also a Jacobi matrix. Therefore, there will exist an unique family of positive measures $\widetilde\omega$ associated with $\widetilde P$. It is possible to derive (see for instance Theorem 2 of \cite{GHH}) that the moments of this new $\widetilde\omega$ are given by 
$$
\widetilde\mu_0=1,\quad \widetilde\mu_n=y_0\mu_{n-1}, \quad n\geq1,
$$
where $\mu_n$ are the moments of $\omega$ and $y_0$ is the free parameter from the UL factorization. If the moment $\mu_{-1}=\int_{-1}^1d\omega(x)/x$ is well defined, then a candidate for the family of spectral measures is then
\begin{equation}\label{spme}
\widetilde\omega(x)=y_0\frac{\omega(x)}{x}+M\delta_0(x),\quad M=1-y_0\mu_{-1},
\end{equation}
where $\delta_0(x)$ is the Dirac delta located at $x=0$. This transformation of the spectral measure $\omega$ is also known as a \emph{Geronimus transformation}.

Similarly, for the LU decomposition, the corresponding Darboux transformation \eqref{DarbT2} $\widehat{P}$ gives rise to a Jacobi matrix and a spectral measure $\widehat\omega$. In this case, it is possible to see that this new spectral measure is given by
\begin{equation}\label{spme2}
\widehat\omega(x)=x\omega(x),
\end{equation}
or, in other words, a \emph{Christoffel transformation} of $\omega$. For more information about the connection between Darboux transformations and Geronimus or Christoffel transformations see \cite{GH,GHH, SZ, Yo, Ze}.

\begin{remark}\label{remy02}
Following Remark \ref{remy0} we can see now how the choice of $y_0=0$ affects the random walk $\widetilde P$ \eqref{DarbT} obtained from the Darboux transformation. Indeed, in this case we must have that $\tilde b_0=0$, $\tilde a_0=s_0$ \emph{and} $\tilde c_1=0$, so $\widetilde P$ will be a random walk with a free parameter $s_0$. The random walk $\widetilde P$ can be decomposed as
$$
\widetilde{P}=\left(
\begin{array}{c|c}
0 & \begin{array}{ccc} s_0&0&\cdots\end{array} \\
\hline
\begin{array}{c} 0\\0\\\vdots\end{array} & \breve P
\end{array}
\right),
$$
where $\breve P$ is stochastic, irreducible and does not depend on $s_0$. If $s_0<1$ there is a probability that the process is absorbed at state $-1$ if the random walk is at state $0$, given by $1-s_0$. But the random walk $\widetilde P$ can never reach the state $0$, unless it starts there. In terms of the spectral measure for $\widetilde P$ in \eqref{spme}  we see that if $y_0=0$ then $\widetilde\omega(x)=\delta_0(x)$ and we have a \emph{degenerate measure}.
\end{remark}

\section{Random walk with constant transition probabilities}\label{sec4}

In this section we apply the machinery developed in previous sections to the study of the random walk with one-step transition probability matrix given by
\begin{equation}\label{PPc}
P=\begin{pmatrix}
b_0&a_0&0&0\\
c&b&a&0&\\
0&c&b&a&\\
&&\ddots&\ddots&\ddots
\end{pmatrix},
\end{equation}
where $b_0+a_0=1$ and $a+b+c=1$. In order to apply Proposition \ref{lemcf} the continued fraction $H$ in \eqref{cfUL} can be written as
$$
H=1-\frac{a_0}{F},
$$
where
\begin{equation}\label{cfF}
\mbox{$F=\contFrac*{1 // c // 1 // a // 1 // c // 1 // a // \cdots}$}.
\end{equation}
We observe that $F$ is a periodic continued fraction of period 2. This means that $F$, if it converges, will be the solution of certain quadratic equation.
\begin{proposition}\label{propF}
Let $c\leq(1-\sqrt{a})^2$. Then the continued fraction $F$ \eqref{cfF} converges to
\begin{equation}\label{FFF}
F=\frac{1}{2}\left(1+a-c+\sqrt{(1+a-c)^2-4a}\right).
\end{equation}
\end{proposition}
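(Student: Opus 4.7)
The plan is to exploit the $2$-periodicity of the continued fraction \eqref{cfF}. If $F$ denotes its value, then the tail after two partial denominators equals $F$ itself, so $F$ must satisfy the self-similar equation
\[
F = 1 - \cfrac{c}{1 - \cfrac{a}{F}},
\]
which clears to the quadratic $F^2 - (1+a-c)F + a = 0$ with candidate roots
\[
F_\pm = \tfrac{1}{2}\bigl((1+a-c) \pm \sqrt{(1+a-c)^2 - 4a}\bigr).
\]
The roots are real under the hypothesis because of the factorization
\[
(1+a-c)^2 - 4a = \bigl((1-\sqrt{a})^2 - c\bigr)\bigl((1+\sqrt{a})^2 - c\bigr),
\]
whose first factor is nonnegative by assumption and whose second is positive since $c<1\leq (1+\sqrt{a})^2$.

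To \emph{establish convergence} I would use the chain sequence criterion of Remark \ref{remCS}: seek a $2$-periodic solution $m_{2k}=m$, $m_{2k+1}=m'$ (with $m_0=m$) to the conditions $(1-m)m' = c$ and $(1-m')m = a$. Eliminating $m'$ recovers the same quadratic, and the choice $m=F_+$, $m' = 1 - a/F_+ = 1 - F_-$ solves it. A short check using $0<a,c$ together with $a+c<1$ (which the hypothesis $c\leq(1-\sqrt{a})^2$ forces, since $c\geq 1-a$ would imply $\sqrt{a}\geq a$, contradicting $a<1$) shows that $F_+<1$ and $0<F_-<1$, so both $m$ and $m'$ lie in $(0,1)$. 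Remark \ref{remCS} then guarantees $0<A_n<B_n$ for the convergents $h_n=A_n/B_n$, so by the argument in Proposition \ref{lemcf} the sequence $h_n$ is strictly decreasing, positive, and hence convergent.

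To \emph{identify the limit as $F_+$}, I would analyze the two-step M\"obius map $G(z) = 1 - c/(1-a/z) = ((1-c)z-a)/(z-a)$, whose only fixed points are $F_\pm$. On $(a,\infty)$ one has $G'(z) = ac/(z-a)^2 > 0$, so $G$ is monotone increasing; and $F_+ > a$ because $1-a-c>0$. The even convergents obey $h_{2k+2} = G(h_{2k})$ with $h_0 = 1 \geq F_+$, so induction gives $h_{2k} \geq F_+$ for every $k$. Combined with the strict monotone decrease of $h_n$ recorded in \eqref{hhn1}--\eqref{hhn2}, this pins down $\lim h_n = F_+$, which is precisely formula \eqref{FFF}.

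The main obstacle I foresee is the boundary case $c = (1-\sqrt{a})^2$, where the discriminant vanishes, $F_+ = F_- = \sqrt{a}$, and $G'(F_+)=1$ so the contraction argument degenerates. The chain sequence parameters $m=\sqrt{a}$, $m'=1-\sqrt{a}$ nevertheless remain in $(0,1)$, so convergence still follows from Remark \ref{remCS}, and uniqueness of the fixed point of $G$ at this boundary forces the limit to equal $\sqrt{a}$, in agreement with \eqref{FFF}.
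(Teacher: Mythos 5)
Your derivation of the quadratic $F^2-(1+a-c)F+a=0$ from the two-periodicity of \eqref{cfF} is exactly the paper's starting point, but from there the two arguments diverge: the paper treats the self-similarity relation as a heuristic, asserts that the hypothesis makes the roots real and that the continued fraction equals the root of larger modulus, and delegates the rigorous justification to Theorem 8.1 of \cite{Wa} on periodic continued fractions. You instead make the proof self-contained with tools already present in the paper. The identity $(1+a-c)^2-4a=\bigl((1-\sqrt a)^2-c\bigr)\bigl((1+\sqrt a)^2-c\bigr)$ explains cleanly why the hypothesis is precisely what makes the discriminant nonnegative; the two-periodic chain-sequence parametrization $m=F_+$, $m'=1-F_-$ (which indeed satisfies $(1-m)m'=c$ and $(1-m')m=a$ and lies in $(0,1)$) feeds into Remark \ref{remCS} to give $0<A_n<B_n$ and hence, via \eqref{hhn1}--\eqref{hhn2}, convergence of the decreasing sequence of convergents; and the monotone two-step M\"obius map $G(z)=((1-c)z-a)/(z-a)$, for which $h_{2k+2}=G(h_{2k})$ holds exactly because the fraction is $2$-periodic, traps the even convergents above $F_+$ and forces the limit to be the larger fixed point, including in the boundary case $F_+=F_-=\sqrt a$ where the contraction degenerates. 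This buys a complete proof where the paper only sketches one, at the cost of length.

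One small slip to fix: in arguing $a+c<1$ you write that $c\ge 1-a$ would imply $\sqrt a\ge a$, which is no contradiction (it holds for all $a\in(0,1)$). The correct deduction from $1-a\le c\le(1-\sqrt a)^2$ is $2\sqrt a\le 2a$, i.e.\ $\sqrt a\le a$, which does contradict $0<a<1$; alternatively $a+c\le 1$ is automatic from $b=1-a-c\ge 0$, with equality excluded by the hypothesis exactly as in the paper's remark following the proposition. The facts you actually need downstream, namely $1-a-c>0$ and hence $F_+>a$ and $F_+<1$, are correct.
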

\begin{proof}
Heuristically the expression of $F$ in \eqref{cfF} can be written as
$$
F=1-\frac{c}{1-\D\frac{a}{F}}.
$$
Then $F$ satisfies the quadratic equation
$$
F^2-(1+a-c)F+a=0,
$$
and the solutions are given by
$$
F_{\pm}=\frac{1}{2}\left(1+a-c\pm\sqrt{(1+a-c)^2-4a}\right).
$$
We now see that the condition $c\leq(1-\sqrt{a})^2$ implies that $F$ is actually a real number and the value of the continued fraction $F$ is exactly the solution having the larger modulus. A more rigorous proof can be done following Theorem 8.1 of \cite{Wa}.
\end{proof}

\begin{remark}
From here we already see that there are many cases of random walks with constant transition probabilities where \emph{it is not possible} to obtain a stochastic UL factorization. For instance, the case of a symmetric random walk with $c=1-a$ (and therefore $b=0$). In this case the condition $c\leq(1-\sqrt{a})^2$ is equivalent to $a\geq1$ which is possible only if $a=1$. But then $P$ would not be irreducible.
\end{remark}

\begin{corollary}\label{cororwc}
Let $P$ be like in \eqref{PPc} with $a_0\leq F$, where $F$ is given by \eqref{FFF}. Then $P=P_UP_L$ like in \eqref{Pxy} where both $P_U$ and $P_L$ are stochastic matrices if and only if we choose $y_0$ in the following range
$$
0\leq y_0\leq1-\frac{a_0}{F}.
$$
\end{corollary}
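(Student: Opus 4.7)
The plan is to apply Proposition~\ref{lemcf} to the constant-transition setup after identifying $H$ from \eqref{cfUL} in closed form and verifying the positivity hypothesis \eqref{AnBn} on its convergents.

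First, observe from the structure of the continued fraction \eqref{cfUL} associated to the matrix $P$ in \eqref{PPc} that the partial numerators are $a_0, c, a, c, a, \ldots$, and the tail starting from the second partial numerator coincides with $F$ from \eqref{cfF}. Hence, structurally,
$$
H = 1 - \frac{a_0}{F}.
$$
Since the hypothesis $a_0 \leq F$ with $F$ defined by \eqref{FFF} already requires the radicand $(1+a-c)^2 - 4a$ to be non-negative, i.e.\ $c \leq (1-\sqrt{a})^2$, Proposition~\ref{propF} applies and supplies the closed form \eqref{FFF} for $F$.

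The next step is to verify the hypothesis $0 < A_n < B_n$ of Proposition~\ref{lemcf} through the chain-sequence criterion of Remark~\ref{remCS}. Look for a representation $a_0 = (1-m_0)m_1$, $c = (1-m_1)m_2$, $a = (1-m_2)m_3$, $c = (1-m_3)m_4, \ldots$ with $m_0 \in [0,1)$ and $m_n \in (0,1)$ for $n \geq 1$. A period-two ansatz on the tail yields $m_{2k-1} = F$ and $m_{2k} = c/(1-F)$ for $k \geq 1$; consistency of these relations is precisely the quadratic $F^2 - (1+a-c)F + a = 0$ derived in the proof of Proposition~\ref{propF}. Matching the first relation forces $m_0 = 1 - a_0/F$, and the hypothesis $0 < a_0 \leq F$ places $m_0 \in [0,1)$. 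The closed form \eqref{FFF} together with $c > 0$ gives $F \in (0,1)$, hence $m_1 \in (0,1)$; and $m_2 = c/(1-F) \in (0,1)$ amounts to $F < 1-c$, which after squaring reduces to $c > 0$ using $b = 1 - a - c \geq 0$. Thus $\{a_0, c, a, c, \ldots\}$ is a chain sequence, and Remark~\ref{remCS} produces $0 < A_n < B_n$ for every $n \geq 1$.

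Proposition~\ref{lemcf} then yields that $P = P_U P_L$ is a stochastic UL factorization if and only if $0 \leq y_0 \leq H = 1 - a_0/F$, which is the stated conclusion. The hard part is the chain-sequence verification in the second step, and in particular the auxiliary bounds $F < 1$ and $F < 1-c$; both reduce after squaring to the stochastic constraint $a+b+c=1$ with $b \geq 0$.
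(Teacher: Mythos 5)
Your proof is correct and follows the same overall strategy as the paper: identify $H=1-a_0/F$ from the periodic tail, use Proposition \ref{propF} for the closed form of $F$, and then invoke Proposition \ref{lemcf}. The difference is that the paper's own proof is a single sentence (it only checks that $1-a_0/F\in[0,1)$) and silently assumes the hypothesis \eqref{AnBn} of Proposition \ref{lemcf}; you actually verify that hypothesis by exhibiting the partial numerators $a_0,c,a,c,\ldots$ as a chain sequence with the period-two parameters $m_{2k-1}=F$, $m_{2k}=c/(1-F)$, $m_0=1-a_0/F$, and then appealing to Remark \ref{remCS}. That extra step is a genuine improvement, since without \eqref{AnBn} the proposition does not apply. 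One small imprecision: the squaring step for $F<1-c$ requires $b=1-a-c>0$ strictly, not merely $b\geq0$ (if $b=0$ one would need $\sqrt{(1+a-c)^2-4a}<0$). This is harmless here, because the standing hypothesis $c\leq(1-\sqrt{a})^2$ together with $0<a<1$ forces $b\geq 2\sqrt{a}(1-\sqrt{a})>0$, but it is worth stating explicitly; likewise the squared inequality comes out as $ac>0$ rather than $c>0$, which of course also holds.
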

\begin{proof}
The condition $a_0\leq F$ gives that $1-a_0/F$ is actually a number between 0 and 1 since $F$ is always positive.
\end{proof}

There are two interesting cases:
\begin{itemize}
\item If $a=c$, then the continued fraction $F$ in \eqref{cfF} has period 1 and the condition $c\leq(1-\sqrt{a})^2$ is equivalent to $a\leq 1/4$. Then $F=\left(1+\sqrt{1-4a}\right)/2$ and 
$$
0\leq y_0\leq1-\frac{2a_0}{1+\sqrt{1-4a}},
$$
as long as $a_0\leq \left(1+\sqrt{1-4a}\right)/2$. Additionally we have $b=1-2a$.

\item  If $c=(1-\sqrt{a})^2$, then the continued fraction $F$ in \eqref{cfF} gives that $F=\sqrt{a}$ and 
$$
0\leq y_0\leq1-\frac{a_0}{\sqrt{a}},
$$
as long as $a_0\leq\sqrt{a}$. Additionally we have $b=2(\sqrt{a}-a)$.
\end{itemize}

It is possible to calculate the coefficients $x_n,y_n,s_n,r_n$ of the UL decomposition using Lemma \ref{lem22}. These coefficients will depend on $a_0$ and $y_0$. In general the formulas are quite cumbersome, but in the special cases introduced above, these formulas simplify considerably. For illustration purposes we will give the coefficients $x_n,y_n,s_n,r_n$ for two cases:

\medskip

a) $a=c=1/4$ (and therefore $b=1/2$). We have $F=1/2$ and then $0\leq y_0\leq 1-2a_0$ as long as $a_0\leq1/2$. A simple computation shows
\begin{align*}
y_n&=\frac{2a_0+(2n-1)(1-y_0-2a_0)}{4a_0+4n(1-y_0-2a_0)},\quad x_n=1-y_n,\quad n\geq1,\\
s_n&=\frac{a_0+(n-1)(1-y_0-2a_0)}{2a_0+(2n-1)(1-y_0-2a_0)},\quad r_n=1-s_n,\quad n\geq 1,\quad s_0=1.
\end{align*}
We clearly see from these formulas that $0<x_n,y_n<1, n\geq0$ and $0<s_n,r_n<1, n\geq1$. There are certain choices of $y_0$ where the coefficients are independent of $a_0$. For instance, taking $y_0=1-ka_0, k\geq2$, with $a_0\leq1/k$, gives
\begin{align}
\label{yxsn2}y_n&=\frac{2+(2n-1)(k-2)}{4+4n(k-2)},\quad x_n=1-y_n,\quad n\geq1,\\
\nonumber s_n&=\frac{1+(n-1)(k-2)}{2+(2n-1)(k-2)},\quad r_n=1-s_n,\quad n\geq 1,\quad s_0=1.
\end{align}
For $k=2$ we see that $x_n=y_n=s_n=r_n=1/2, n\geq1$. We will give a probabilistic implementation in terms of urns models below.

\begin{remark}
Following Remark \ref{remLU}, for the LU factorization we must have $0\leq a_0\leq\widetilde H=F$, where $F$ is the continued fraction \eqref{cfF}. As long as we are in the conditions of Proposition \ref{propF} we will have a convergent continued fraction $F$ which value is given by \eqref{FFF}. The coefficients $\tilde x_n,\tilde y_n,\tilde s_n,\tilde r_n$ in \eqref{Pxy2} can be calculated in the same way as before. For instance, for the case of $a=c=1/4$ (and therefore $b=1/2$), we have $F=1/2$ and then $a_0$ must be taken in the range $0\leq a_0\leq1/2$. Then a simple computation shows that
\begin{align*}
\tilde x_n&=\frac{a_0+n(1-2a_0)}{2a_0+2n(1-2a_0)},\quad \tilde y_n=1-\tilde x_n,\quad n\geq0,\\
\tilde r_n&=\frac{2a_0+(2n-1)(1-2a_0)}{4a_0+4n(1-2a_0)},\quad \tilde s_n=1-\tilde r_n,\quad n\geq 1,\quad \tilde s_0=1.
\end{align*}
\end{remark}

\medskip

b) $a=1/9$, $c=(1-\sqrt{a})^2=4/9$ (and therefore $b=4/9$). We have $F=1/3$ and then $0\leq y_0\leq 1-3a_0$ as long as $a_0\leq1/3$. In this case we have to differentiate between odd and even cases. Therefore
\begin{align*}
y_{2n}&=\frac{6a_0+(6n-1)(1-y_0-3a_0)}{9a_0+9n(1-y_0-3a_0)},\quad y_{2n-1}=\frac{12a_0+(12n-8)(1-y_0-3a_0)}{18a_0+9(2n-1)(1-y_0-3a_0)},\quad n\geq1,\\
s_0&=1, s_1=\frac{a_0}{1-y_0},s_{n}=\frac{1}{9(1-y_{n-1})},n\geq2,\quad x_n=1-y_n,\quad r_n=1-s_n,\quad n\geq1.
\end{align*}
As before, if we take $y_0=1-ka_0, k\geq3,$ with $a_0\leq1/k$, we get the coefficients independent of $a_0$.
\medskip

For any values of $a$ and $c$ the sequence $y_n$ seems to have the following form ($a_0\leq F$) 
$$
y_n=\frac{\beta_1a_0+\xi_n^1(1-y_0-a_0/F)}{\beta_2a_0+\xi_n^2(1-y_0-a_0/F)},\quad n\geq1,
$$
for certain constants $\beta_i,i=1,2,$ and sequences $\xi_n^i, i=1,2,$ with $\beta_1<\beta_2$ and $\xi_n^1<\xi_n^2, n\geq 1$.

\subsection{The spectral measure}
It is well known how to get the spectral measure associated with the random walk \eqref{PPc} using the techniques developed in \cite{KMc6} (see also \cite{CG}). First we notice that $P$ can be decomposed as
$$
P=\left(
\begin{array}{c|c}
b_0 & \begin{array}{ccc} a_0&0&\cdots\end{array} \\
\hline
\begin{array}{c} c\\0\\\vdots\end{array} & \breve P
\end{array}
\right),\quad \breve P=\begin{pmatrix}
b&a&&\\
c&b&a&&\\
&\ddots&\ddots&\ddots
\end{pmatrix}.
$$
The spectral measure associated with $\breve P$ can be easily calculated since the associated process (or the process built from $\breve P$ by deleting the first row and column of $\breve P$) is exactly the same as $\breve P$. Therefore the Stieltjes transform of the spectral measure $\psi$ is given by ($b_0=1-a_0$ and $b=1-a-c$)
$$
B(z;\psi)\doteq\int_{-1}^1\frac{d\psi(x)}{x-z}=\frac{b-z\pm\sqrt{(z-\sigma_+)(z-\sigma_-)}}{2ac},\quad \sigma_{\pm}=1-(\sqrt{a}\mp\sqrt{c})^2.
$$
An application of the Stieltjes-Perron inversion formula gives that the spectral measure has only a continuous part given by
$$
\psi(x)=\frac{1}{2\pi ac}\sqrt{(x-\sigma_-)(\sigma_+-x)},\quad x\in[\sigma_-,\sigma_+]\subseteq[-1,1].
$$
The orthogonal polynomials associated with this measure are basically affine transformations of the Chebyshev polynomials of the second kind.

Now we compute the spectral measure $\omega$ of the process $P$. For that we use the same trick as before but now we know the expression of $B(z;\psi)$. After straightforward computations we have that the Stieltjes transform of the spectral measure $\omega$ is given by ($b_0=1-a_0$ and $b=1-a-c$)
$$
B(z;\omega)=\frac{2a-aa_0-a_0+a_0c+(a_0-2a)z+a_0\sqrt{(z-\sigma_+)(z-\sigma_-)}}{2(1-z)((a_0-a)z+a-aa_0+a_0c+a_0^2-a_0)}.
$$
Observe that the Stieltjes transform has two real poles (or one if $a_0=a$). One at $z=1$ and the other at $z=\gamma$, where
$$
\gamma=\frac{a_0-a+aa_0-a_0c-a_0^2}{a_0-a}.
$$
This means that the measure $\omega$ will consist of a continuous density plus possibly some delta masses located at $x=1$ and/or $x=\gamma$ with certain weights, i.e.
\begin{equation}\label{meas}
\omega(x)=\omega_c(x)+\omega\left(\{1\}\right)\delta_1(x)+\omega\left(\{\gamma\}\right)\delta_\gamma(x).
\end{equation}
Again, using the Stieltjes-Perron inversion formula, the continuous part of the measure is given by
\begin{equation}\label{measc}
\omega_c(x)=\frac{a_0\sqrt{(x-\sigma_-)(\sigma_+-x)}}{2\pi(1-x)((a_0-a)x+a-aa_0+a_0c+a_0^2-a_0)},\quad x\in[\sigma_-,\sigma_+]\subseteq[-1,1].
\end{equation}
The discrete masses in $\omega$ \eqref{meas} come from the residues at the simple poles of $B(z;\omega)$. A simple computation shows that
$$
\omega\left(\{1\}\right)=\frac{c-a}{a_0+c-a}\chi_{\{c>a\}},\quad \omega\left(\{\gamma\}\right)=\frac{(a_0-a)^2-ac}{(a_0-a)^2-ac+a_0c}\chi_{\{(a_0-a)^2>ac\}},
$$
where $\chi_A$ is the indicator function. It is also possible to see that the location of $\gamma$ is outside of $[\sigma_-,\sigma_+]$ and that with the condition $(a_0-a)^2>ac$ we must have $\gamma\in[-1,1]\setminus[\sigma_-,\sigma_+]$. The condition $(a_0-a)^2>ac$ can also be written in terms of $a_0$ if we choose either $a_0<\max\{0,a-\sqrt{ac}\}$ or $a_0>\min\{a+\sqrt{ac},1\}$ whenever one or both relations hold. In other words
\begin{equation}\label{conds}
\left[\sqrt{a}>\sqrt{c}\;\;\;\mbox{AND}\;\;\; a_0<a-\sqrt{ac}\right]\;\;\;\mbox{OR}\;\;\; \left[\sqrt{a}+\sqrt{c}<1/\sqrt{a}\;\;\;\mbox{AND}\;\;\; a_0>a+\sqrt{ac}\right].
\end{equation}

Let us recall the two interesting cases introduced after Corollary \ref{cororwc}:
\begin{itemize}
\item If $a=c$ we have that $\sigma_-=1-4a, \sigma_+=1$, $\omega\left(\{1\}\right)=0$ (no mass at $x=1$) and (the left hand side of \eqref{conds} does not count)
$$
\omega\left(\{\gamma\}\right)=\frac{a_0-2a}{a_0-a}\chi_{\{a<1/2, a_0>2a\}},\quad\gamma=1-\frac{a_0^2}{a_0-a}.
$$
Therefore the spectral measure \eqref{meas} is given by
\begin{equation}\label{spmeac}
\omega(x)=\omega_c(x)+\frac{a_0-2a}{a_0-a}\chi_{\{a<1/2, a_0>2a\}}\delta_\gamma(x).
\end{equation}
where
$$
\omega_c(x)=\frac{a_0}{2\pi[(a_0-a)x+a-a_0+a_0^2]}\sqrt{\frac{x-1+4a}{1-x}},\quad x\in[1-4a,1].
$$

\item If $c=(1-\sqrt{a})^2$ we have that $\sigma_-=0, \sigma_+=4(\sqrt{a}-a)$, 
$$
\omega\left(\{1\}\right)=\frac{1-2\sqrt{a}}{1+a_0-2\sqrt{a}}\chi_{\{a<1/4\}},
$$
and
$$
\omega\left(\{\gamma\}\right)=\left[\frac{a}{a-a_0}+\frac{a_0}{a_0+1-2\sqrt{a}}\right]\chi_{\left\{\{a<1/4, a_0<2a-\sqrt{a}\}\cup\{a_0>\sqrt{a}\}\right\}},\quad\gamma=\frac{(a_0-\sqrt{a})^2}{a-a_0}.
$$
Therefore the spectral measure $\omega$ is given by \eqref{meas} where the continuous part $\omega_c$ is
$$
\omega_c(x)=\frac{a_0\sqrt{x(4\sqrt{a}-4a-x)}}{2\pi(1-x)((a_0-a)x+a_0^2+a-2a_0\sqrt{a})},\quad x\in[0,4(\sqrt{a}-a)].
$$ 

\end{itemize}

\medskip 

As we mentioned in Section \ref{sec3} the spectral measure associated with the Darboux transformation is given by the Geronimus transformation \eqref{spme} of the measure $\omega$ in \eqref{meas}. For this to work we need to have the moment $\mu_{-1}=\int\omega(x)/xdx$ well defined. The delta at $x=1$ does not add any additional problem to this moment. The delta at $x=\gamma$ may produce a problem if $\gamma=0$, which is possible if
$$
c=\frac{(1-a_0)(a_0-a)}{a_0},\quad ac<(a_0-a)^2.
$$
Substituting the value of $c$ in the inequality gives that $a_0>\sqrt{a}$. With this assumption we have that $F=a_0$. According to Corollary \ref{cororwc} the only possible choice is for $y_0=0$, which we already know gives a degenerate measure. If $ac\geq(a_0-a)^2$ then there is no delta at $x=\gamma$ and since $\gamma$ is always located outside of $[\sigma_-,\sigma_+]$ there is no problem with the continuous part $\omega_c$ of $\omega$. 

As for the continuous part $\omega_c$ \eqref{measc}, since we are dividing by $x$ in the Geronimus transformation, there may be a problem of integrability if $0\in(\sigma_-,\sigma_+)$. On one side, it is easy to see that $\sigma_+>0$. On the other side, we have that $\sigma_-<0$ if and only if $\sqrt{a}+\sqrt{c}>1$. In other words, $ac>b^2/4$. But in this case we should have that $c>(1-\sqrt{a})^2$ and this is a contradiction with the assumptions of Proposition \ref{propF}.

As a conclusion, every time we want to perform a Darboux transformation with stochastic factors, it is always possible to find the associated spectral measure via the Geronimus transformation of $\omega$ and there will be no integrability problems. For the LU factorization the associated spectral measure is the Christoffel transformation \eqref{spme2} as long as $0\leq a_0\leq F$. But this does not bring in any integrability problems to $\omega$.

\subsection{A factorization and its associated urn models}

We will give a probabilistic implementation of the UL factorization in terms of a family of urn models for the case of $a=c=1/4$ and $y_0=1-ka_0, k\geq2$, with $a_0\leq1/k$ given by \eqref{yxsn2} (the rest of cases such as the LU factorization or the case $a=1/9,c=4/9$ can be treated in a similar way with small modifications). We will assume that $k\geq2$ is a positive integer (certain number of balls). The stochastic matrix $P$ is then given by
\begin{equation*}
P=\begin{pmatrix}
1-a_0&a_0&0&0\\
1/4&1/2&1/4&0&\\
0&1/4&1/2&1/4&\\
&&\ddots&\ddots&\ddots
\end{pmatrix}.
\end{equation*}
This is already a very simple random walk and one could argue that analyzing it any further is unnecessary. It is given here as an illustration of what will be done later in a more complicated case.

From \eqref{yxsn2} we see that the factors $P_U$ and $P_L$ in \eqref{Pxy} are given by
\begin{equation*}
P_U=\begin{pmatrix}
1-ka_0&ka_0&0&0\\
0&\frac{k}{4k-4}&\frac{3k-4}{4k-4}&0&\\
0&0&\frac{3k-4}{8k-12}&\frac{5k-8}{8k-12}&\\
&&\ddots&\ddots&\ddots
\end{pmatrix},\;P_L=\begin{pmatrix}
1&0&0&0\\
\frac{k-1}{k}&\frac{1}{k}&0&0&\\
0&\frac{2k-3}{3k-4}&\frac{k-1}{3k-4}&0&\\
&&\ddots&\ddots&\ddots
\end{pmatrix}.
\end{equation*}
Each one of these matrices $P_U$ and $P_L$ will represent an experiment in terms of an urn model, which we call Experiment 1 and Experiment 2, respectively. Observe that $k\geq2$ is a free positive integer, so we will have a \emph{family} of urn experiments which combined gives the \emph{same} urn model for $P$. There is a very simple case where $k=2$ and then $x_n=y_n=s_n=r_n=1/2, n\geq1$, so we will assume that $k\geq3$.

The Experiment 1 (for $P_U$) consists of a discrete time pure birth random walk on the nonnegative integers $\mathbb{Z}_{\geq0}$ (see diagram below \eqref{ULd}) where each state represents the number of blue balls in the urn. Assume that this urn sits in a bath consisting of an infinite number of blue and red balls. If the state of the system is $n$ blue balls ($n\geq1$), take $k+n(2k-5)$ blue balls and $2+(2n-1)(k-2)$ red balls from the bath and add them to the urn. Draw one ball from the urn at random with the uniform distribution. The probability of having a blue ball is given by $x_n$, while the probability of having a red ball is $y_n$. If we had initially drawn a red ball, then we remove all red balls in the urn and $k+n(2k-5)$ blue balls from the urn (i.e. all balls we introduced at the beginning) and start over. If we get a blue ball, then we remove all red balls and $k-1+n(2k-5)$ blue balls from the urn (so that there are $n+1$ blue balls in the urn) and start over. We follow this strategy except when the initial number of blue balls is 0, in which case there is an initial test to determine if in one step of time we add 1 blue ball to the urn. This initial test may be though of as tossing a (probably biased) coin with probability of heads equal to $ka_0$. If we get heads, then we add 1 blue ball to the urn, otherwise we repeat the coin tossing until we get heads.

The Experiment 2 (for $P_L$) is very similar but without an initial test. In this case we will have a discrete time pure death random walk on the nonnegative integers $\mathbb{Z}_{\geq0}$ (see diagram above \eqref{Pxy2}) where each state represents again the number of blue balls in the urn. If the state of the system is $n$ blue balls ($n\geq1$), take $1+n(k-3)$ blue balls and $1+(n-1)(k-2)$ red balls from the bath and add them to the urn. Draw again one ball from the urn at random. The probability of having a blue ball is given by $r_n$, while the probability of having a red ball is $s_n$. If we had initially drawn a red ball, then we remove all red balls in the urn and $1+n(k-3)$ blue balls from the urn (i.e. all balls we introduce at the beginning) and start over. If we get a blue ball, then we remove all red balls and $2+(n-1)(k-2)$ blue balls from the urn (so that there are $n-1$ blue balls in the urn) and start over. If the urn is empty we stop the experiment.

The urn model for $P$ will be the composition of both experiments, first the Experiment 1 and then the Experiment 2, while the urn model for the Darboux transformation $\widetilde P$ \eqref{DarbT} will also be the composition of both experiments but proceeds in the reversed order. There will be 4 possibles results, $n+1$, $n$ (twice) or $n-1$ blue balls in the urn. A diagram for this urn model is similar to the one given at the end of Section \ref{sec5}.

\medskip

Since $a_0\leq1/k\leq1/2, k\geq2,$ we have that the spectral measure associated with $P$ is given by only the continuous part of \eqref{spmeac}, i.e.
$$
\omega(x)=\frac{2a_0}{\pi[(4a_0-1)x+(1-2a_0)^2]}\sqrt{\D\frac{x}{1-x}},\quad x\in[0,1].
$$

In order to calculate the spectral measure $\widetilde\omega$ for the Darboux transformation $\widetilde P$ we need to compute the Geronimus transformation of $\omega$ (see \eqref{spme}). An easy computation shows
$$
\mu_{-1}=\int_0^1\frac{\omega(x)}{x}dx=\frac{1}{1-2a_0}.
$$
We exclude the case $a_0=1/2$ when $k=2$ in which case we have $y_0=0$ and $\widetilde\omega$ will be a degenerate measure. Therefore
$$
\widetilde\omega(x)=\frac{2a_0(1-ka_0)}{\pi[(4a_0-1)x+(1-2a_0)^2]\sqrt{x(1-x)}}+\frac{a_0(k-2)}{1-2a_0}\delta_0(x),\quad x\in[0,1].
$$

From the probabilistic point of view and since we have explicit expressions of the spectral measures $\omega$ and $\widetilde\omega$, we can see that $\int_0^1\frac{\omega}{1-x}=\infty$ and $\int_0^1\frac{\widetilde\omega}{1-x}=\infty$. Therefore, both random walks are always \emph{recurrent}. From the inverse of the norms of the corresponding orthogonal polynomials we see that the \emph{invariant measure} (not a distribution) is given by
$$
\bm\pi=\left(1,4a_0,4a_0,\ldots\right).
$$

\section{Random walk generated by the Jacobi polynomials}\label{sec5}

In this section we will study in detail the case of the Jacobi orthogonal polynomials. For $\alpha,\beta>-1$ define the coefficients
\begin{align}
\nonumber a_n&=\frac{(n+\beta+1)(n+1+\alpha+\beta)}{(2n+\alpha+\beta+1)(2n+2+\alpha+\beta)},\quad n\geq0,\\
\label{ttrrJ}b_n&=\frac{(n+\beta+1)(n+1)}{(2n+\alpha+\beta+1)(2n+2+\alpha+\beta)}+\frac{(n+\alpha)(n+\alpha+\beta)}{(2n+\alpha+\beta+1)(2n+\alpha+\beta)},\quad n\geq0,\\
\nonumber c_n&=\frac{n(n+\alpha)}{(2n+\alpha+\beta+1)(2n+\alpha+\beta)},\quad n\geq1.
\end{align}
Observe that all these coefficients are nonnegative, $a_0+b_0=1$ and $a_n+b_n+c_n=1, n\geq1,$ so they are the coefficients of a discrete time random walk on the nonnegative integers and depend on the state of the system. Also it is easy to see that the random walk is irreducible for the values $\alpha,\beta>-1$.

The family of polynomials generated by the three-term recursion relation
\begin{equation*}
xQ_n^{(\alpha,\beta)}(x)=a_{n}Q_{n+1}^{(\alpha,\beta)}(x)+b_nQ_n^{(\alpha,\beta)}(x)+c_{n}Q_{n-1}^{(\alpha,\beta)}(x),\quad n\geq0,
\end{equation*}
where $Q_{-1}^{(\alpha,\beta)}(x)=0$ and $Q_0^{(\alpha,\beta)}(x)=1$ is the well known family of \emph{Jacobi polynomials}, which are orthogonal with respect to the (normalized) weight
\begin{equation}\label{WW}
w(x)=\frac{\Gamma(\alpha+\beta+2)}{\Gamma(\alpha+1)\Gamma(\beta+1)}x^{\alpha}(1-x)^{\beta},\quad x\in[0,1].
\end{equation}
Notice that the Jacobi polynomials satisfy the condition 
$$
Q_n^{(\alpha,\beta)}(1)=1.
$$ 
The normalization in terms of $a_n, b_n, c_{n+1},n\geq0,$ is natural when one thinks of these polynomials (at least for some values of $\alpha,\beta$) as the spherical functions for some appropriate symmetric space, and insists that these functions take the value $1$ at the North pole of the corresponding sphere. The simplest of all cases is the one with $\alpha=\beta=0$ when one gets the Legendre polynomials and the usual two dimensional sphere sitting in ${\mathbb R}^3$.

We need to see that we can apply the stochastic UL (or LU) factorization. For that we need to apply Proposition \ref{lemcf} and see if the continued fraction $H$ in \eqref{cfUL} is convergent. In this case it is possible to check that the corresponding sequence of alternating numbers $a_0,c_1,a_1,c_2,\ldots$ is a \emph{chain sequence}. Following the notation of Remark \ref{remCS} let us call $\alpha_n, n\geq1,$ the sequence of partial numerators $a_0,c_1,a_1,c_2,\ldots$. Therefore, $\alpha_n=(1-m_{n-1})m_n$ where
$$
m_{2n}=\frac{n}{2n+\alpha+\beta+1},\quad m_{2n+1}=\frac{n+\beta+1}{2n+\alpha+\beta+2},\quad n\geq0.
$$
Since $m_0=0$, then $H$ converges to $(1+L)^{-1}$ where $L$ is given by \eqref{LL}, which in this case is a hypergeometric series. It is possible to see that
$$
L=\frac{\beta+1}{\alpha}.
$$
Since we have a chain sequence the conditions of the Proposition \ref{lemcf} hold, so we have that the stochastic UL factorization is always possible if we choose the free parameter $y_0$ in the range
\begin{equation}\label{y0r}
0\leq y_0\leq\frac{1}{1+L}=\frac{\alpha}{\alpha+\beta+1}.
\end{equation}

For the LU factorization there is no free parameter and we need to have
$$
a_0=\frac{\beta+1}{\alpha+\beta+2}\leq\widetilde H,
$$
where $\widetilde H$ is the continued fraction given by \eqref{cfLU}. In this case, if we call $\alpha_n, n\geq1,$ the sequence of partial numerators $c_1,a_1,c_2,a_2,\ldots$, then $\alpha_n$ is a \emph{chain sequence} with coefficients
$$
m_{2n}=\frac{n+\beta+1}{2n+\alpha+\beta+2},\quad m_{2n+1}=\frac{n+1}{2n+\alpha+\beta+3},\quad n\geq0.
$$
Observe now that $m_0\neq0$. Therefore we have
$$
\widetilde H=m_0+\frac{1-m_0}{1+L},
$$
where $L$ is given by \eqref{LL}. As before, it is possible to calculate $L$, which in this case it is given by $L=1/\alpha$. Therefore
$$
\widetilde H=\frac{\beta+1}{\alpha+\beta+2}+\frac{\frac{\alpha+1}{\alpha+\beta+2}}{1+1/\alpha}=\frac{\alpha+\beta+1}{\alpha+\beta+2}.
$$
We clearly see that $a_0\leq\widetilde H$ and therefore we can always perform a stochastic LU factorization, but now without a free parameter. From \eqref{ULd}, \eqref{recr} and \eqref{recx} we can see that 
\begin{align}
\label{xysrLUJ}\tilde x_n&=\frac{n+\beta+1}{2n+\alpha+\beta+2},\quad \tilde y_n=\frac{n+\alpha+1}{2n+\alpha+\beta+2},\quad n\geq0,\\
\nonumber\tilde s_n&=\frac{n+\alpha+\beta+1}{2n+\alpha+\beta+1},\quad \tilde r_n=\frac{n}{2n+\alpha+\beta+1},\quad n\geq1,\quad \tilde s_0=1.
\end{align}

\medskip

Coming back to the UL factorization, there are two cases where all coefficients $x_n,y_n,s_n,r_n$ simplify considerably, namely when $y_0$ coincides with one of the endpoints of the range \eqref{y0r}. They are
\begin{enumerate}
\item $y_0=0$. From Lemma \ref{lem22} one can check that the coefficients $x_n,y_n,s_n,r_n$ are given by 
\begin{align*}
x_n&=\frac{n+\alpha+\beta+1}{2n+\alpha+\beta+1},\quad y_n=\frac{n}{2n+\alpha+\beta+1},\quad n\geq0,\\
s_n&=\frac{n+\beta}{2n+\alpha+\beta},\quad r_n=\frac{n+\alpha}{2n+\alpha+\beta},\quad n\geq1,\quad s_0=1.
\end{align*}
\item $y_0=\frac{\alpha}{\alpha+\beta+1}$. From Lemma \ref{lem22} one can check that the coefficients $x_n,y_n,s_n,r_n$ are given by
\begin{align}
\label{xysrJ}x_n&=\frac{n+\beta+1}{2n+\alpha+\beta+1},\quad y_n=\frac{n+\alpha}{2n+\alpha+\beta+1},\quad n\geq0,\\
\nonumber s_n&=\frac{n+\alpha+\beta}{2n+\alpha+\beta},\quad r_n=\frac{n}{2n+\alpha+\beta},\quad n\geq1,\quad s_0=1.
\end{align}
These coefficients are scalar counterparts of the matrix-valued coefficients given in \cite{GPT3}, where the authors consider a special case of the UL block factorization.
\end{enumerate}

If $0<y_0<\frac{\alpha}{\alpha+\beta+1}$ then the coefficients $x_n,y_n,s_n,r_n$ are more difficult to calculate. Nevertheless it is possible to derive explicit formulas as the following:
\begin{align*}
x_n&=\frac{\gamma_n+\delta_ny_0}{(\varepsilon_n+\nu_ny_0)(2n+\alpha+\beta+1)},\quad y_n=1-x_n,\quad n\geq0,\\
s_n&=\frac{\varepsilon_n+\nu_ny_0}{(\gamma_{n-1}+\delta_{n-1}y_0)(2n+\alpha+\beta)},\quad s_n=1-r_n,\quad n\geq1,\quad s_0=1,
\end{align*}
where
\begin{align*}
\gamma_n&=(\alpha+1)_n(\alpha+\beta+2)_n,\\
\alpha\delta_n&=n!(\beta+1)_{n+1}-(\alpha+\beta+1)\gamma_n,\\
\varepsilon_n&=(\alpha+1)_n(\alpha+\beta+2)_{n-1},\\
\alpha\nu_n&=n!(\beta+1)_{n}-(\alpha+\beta+1)\varepsilon_n.
\end{align*}
These sequences also satisfy the recurrence formulas
\begin{align*}
\varepsilon_n&=(2n+\alpha+\beta)\gamma_{n-1}-(n+\beta)(n+\alpha+\beta)\varepsilon_{n-1},\\
\nu_n&=(2n+\alpha+\beta)\delta_{n-1}-(n+\beta)(n+\alpha+\beta)\nu_{n-1}.
\end{align*}
If $y_0=0$ or $y_0=\frac{\alpha}{\alpha+\beta+1}$ these formulas reduce to the simpler form introduced above. There is a more convenient way of writing the coefficients $x_n,y_n,s_n,r_n$ if we make the substitution $y_0=\frac{\alpha}{\alpha+\beta+1}h_0$ (in which case $0\leq h_0\leq 1$). Indeed,
\begin{align}
\label{xysrJE}x_n&=\frac{n+\alpha+\beta+1}{2n+\alpha+\beta+1}\cdot\frac{h_0n!(\beta+1)_n(n+\beta+1)+(1-h_0)(\alpha+1)_n(\alpha+\beta+1)_{n+1}}{h_0n!(\beta+1)_n(n+\alpha+\beta+1)+(1-h_0)(\alpha+1)_n(\alpha+\beta+1)_{n+1}},\\
\nonumber y_n&=\frac{n+\alpha}{2n+\alpha+\beta+1}\cdot\frac{h_0n!(\beta+1)_n(n+\alpha)+(1-h_0)(\alpha+1)_{n}(\alpha+\beta+1)_{n}n}{h_0n!(\beta+1)_n(n+\alpha)+(1-h_0)(\alpha+1)_{n}(\alpha+\beta+1)_{n}(n+\alpha)},\\
\nonumber s_n&=\frac{n+\alpha+\beta}{2n+\alpha+\beta}\cdot\frac{h_0(n-1)!(\beta+1)_{n}(n+\alpha+\beta)+(1-h_0)(\alpha+1)_{n-1}(\alpha+\beta+1)_{n}(n+\beta)}{h_0(n-1)!(\beta+1)_n(n+\alpha+\beta)+(1-h_0)(\alpha+1)_{n-1}(\alpha+\beta+1)_{n}(n+\alpha+\beta)},\\
\nonumber r_n&=\frac{n+\alpha}{2n+\alpha+\beta}\cdot\frac{h_0n!(\beta+1)_n+(1-h_0)(\alpha+1)_{n}(\alpha+\beta+1)_{n}}{h_0(n-1)!(n+\alpha)(\beta+1)_n+(1-h_0)(\alpha+1)_{n}(\alpha+\beta+1)_{n}}.
\end{align}
From these formulas we clearly see that each coefficient $x_n,y_n,s_n,r_n$ is the multiplication of two positive numbers less than 1. We will give probabilistic implementations of these coefficients in the next subsection.

\medskip

Finally we explore the spectral measure associated with the Darboux transformation \eqref{DarbT}. We need to apply the Geronimus transformation \eqref{spme}. In this case it is easy to see from \eqref{WW} that
$$
\mu_{-1}=\int_0^1\frac{w(x)}{x}dx=\frac{\alpha+\beta+1}{\alpha}.
$$
Therefore, following \eqref{spme}, we have that
\begin{equation*}
\widetilde{w}(x)=y_0\frac{\Gamma(\alpha+\beta+2)}{\Gamma(\alpha+1)\Gamma(\beta+1)}x^{\alpha-1}(1-x)^{\beta}+\left(1-y_0\frac{\alpha+\beta+1}{\alpha}\right)\delta_0(x),\quad x\in[0,1].
\end{equation*}
This measure is integrable as long as $\alpha>0$ and $\beta>-1$. We see that if $y_0$ is in the range \eqref{y0r} then the mass at 0 is always nonnegative, and vanishes if $y_0=\frac{\alpha}{\alpha+\beta+1}$. The case  $y_0=0$ was treated in Remark \ref{remy02} and in this case the spectral measure is just the Dirac delta $\delta_0(x)$ (degenerate).

Finally, for the LU decomposition, the associated (normalized) weight is given by the Christoffel transform of $w$, i.e.
$$
\widehat w(x)=\frac{\Gamma(\alpha+\beta+3)}{\Gamma(\alpha+2)\Gamma(\beta+1)}x^{\alpha+1}(1-x)^{\beta},\quad x\in[0,1].
$$

In order to study recurrence we have to see the behavior of the integral $\int_0^1\frac{w(x)}{1-x}dx$. But we can see that this behavior only depends on the parameter $\beta$. If $-1<\beta\leq0$ then the integral is $\infty$, so the process will always be \emph{recurrent}. Otherwise, for $\beta>0$, the process will be \emph{transient}.

Again the inverse of the norms of the Jacobi orthogonal polynomials gives the explicit expression of the components of the invariant measure (not a distribution in any case). Indeed we have
$$
\|Q_n^{(\alpha,\beta)}\|^2_{w}=\frac{n!(\alpha+1)_n}{(\beta+1)_n(\alpha+\beta+2)_{n-1}(2n+\alpha+\beta+1)},
$$
Therefore
$$
\bm\pi=\left(1,\frac{(\beta+1)(\alpha+\beta+3)}{\alpha+1},\frac{(\beta+1)_2(\alpha+\beta+2)(\alpha+\beta+5)}{2(\alpha+2)_2},\cdots\right).
$$

\subsection{An urn model for the Jacobi polynomials}

We now give an urn model associated with the Jacobi polynomials. We will use first the simpler UL decomposition with coefficients given by \eqref{xysrJ} and then make a few comments about the general case \eqref{xysrJE}. In the latter case, as in the previous section, we have a family of urn model experiments, which combined give the same model for the Jacobi polynomials. A similar urn model can be derived from the LU decomposition \eqref{xysrLUJ}, but now without a free parameter.

In \cite{G3} one finds what is probably the first urn model going along with the Jacobi polynomials. This is a rather contrived model when compared to more familiar ones such as those of Ehrenfest and Bernoulli-Laplace. These can be found in W. Feller's classical book, see \cite{Fe}. It turns out that these celebrated models are related to the Krawtchouk and Hahn orthogonal polynomials, in the same way that our models are related to the Jacobi polynomials. Here we give a slightly less elaborate urn model based on the composition of two easier urn experiments given by the UL factorization \eqref{xysrJ}.

From now on, it will be assumed that the parameters $\alpha$ and $\beta$ are nonnegative integers. Consider the discrete time random walk on the nonnegative integers $\mathbb{Z}_{\geq0}$ whose one step transition probability matrix $P$ coincides with the one that gives the three-term recursion relation given in \eqref{ttrrJ}. Consider the UL factorization $P=P_UP_L$ \eqref{Pxy} with coefficients $x_n,y_n,s_n,r_n$ given by \eqref{xysrJ}. Each one of these matrices $P_U$ and $P_L$ will represent an experiment in terms of an urn model, which we call Experiment 1 and Experiment 2, respectively. At times $t=0,1,2,\ldots$ an urn contains $n$ blue balls and this determines the state of our random walk on ${\mathbb Z}_{\geq 0}$ at that time. Each urn for both experiments sits in a bath consisting of an infinite number of blue and red balls.

The Experiment 1 (for $P_U$) consists of a discrete time pure birth random walk on the nonnegative integers ${\mathbb Z}_{\geq 0}$ (see diagram below \eqref{ULd}). If the state of the system is $n$ blue balls ($n\geq0$), take $\beta+1$ blue balls and $n+\alpha$ red balls from the bath and add them to the urn. Draw one ball from the urn at random with the uniform distribution. The probability of having a blue ball is given by $x_n$, while the probability of having a red ball is $y_n$. If we had initially drawn a red ball, then we remove all red balls in the urn and $\beta+1$ blue balls from the urn (i.e. all balls we introduce in this first step) and start over. If we get a blue ball, then we remove all red balls and $\beta$ blue balls from the urn (so that there are $n+1$ blue balls in the urn) and start over.

The Experiment 2 (for $P_L$) consists of a discrete time pure death random walk on the nonnegative integers ${\mathbb Z}_{\geq 0}$ (see diagram above \eqref{Pxy2}). If the state of the system is $n$ blue balls ($n\geq0$), take $n+\alpha+\beta$ red balls from the bath and add them to the urn. Draw again one ball from the urn at random. The probability of having a blue ball is given by $r_n$, while the probability of having a red ball is $s_n$. If we had initially drawn a red ball, then we remove all red balls in the urn and start over. If we get a blue ball, then we remove that blue ball and all red balls from the urn (so that there are $n-1$ blue balls in the urn) and start over. If the urn is empty we stop the experiment. Observe that if we have 0 blue balls in the urn the experiment will not change from that moment on.

As in the previous section, the urn model for $P$ will be the composition of Experiment 1 and then Experiment 2, while the urn model for the Darboux transformation $\widetilde P$ \eqref{DarbT} proceeds in the reversed order. If we perform first Experiment 1 we will end up with an urn with either $n$ (if we draw a red ball) or $n+1$ (if we draw a blue ball) blue balls. Now we perform Experiment 2 with $n$ or $n+1$ blue balls, in which case we may have either $n-1$ (if we draw a blue ball) or $n$ (if we draw a red ball) blue balls, while for the $n+1$ case we may have either $n$ (if we draw a blue ball) or $n+1$ (if we draw a red ball) blue balls. The combination of probabilities of these four cases gives the coefficients of the three-term recurrence relation \eqref{ttrrJ} (see also \eqref{ULd}) for $P$ for the Jacobi polynomials (see diagram below).
\vspace{.5cm}
\begin{center}
$$
\cnode(-5,-3){.6cm}{0}
\cnode(-2.5,-3){0.9cm}{1}\cnode(0,-1.5){.6cm}{2}\cnode(0,-4.5){.6cm}{3}\cnode(2.5,-1.5){1.1cm}{4}\cnode(2.5,-4.5){1.1cm}{5}
\cnode(5,0){.6cm}{6}\cnode(5,-3){.6cm}{7}\cnode(5,-6){.6cm}{8}
\psset{nodesep=3pt,nrot=:U,arrows=->}
\ncline{->}{0}{1}
\ncline{->}{1}{2}
\naput*{B}
\ncline{->}{1}{3}
\nbput*{R}
\ncline{->}{2}{4}
\ncline{->}{3}{5}
\ncline{->}{4}{6}
\naput*{R}
\ncline{->}{4}{7}
\nbput*{B}
\ncline{->}{5}{7}
\naput*{R}
\ncline{->}{5}{8}
\nbput*{B}
\psset{labelsep=-4.75ex}\nput{90}{0}{n\; B}
\uput[u](-2.5,-2.3){\mbox{{\footnotesize $n+\beta+1\; B$}}}
\uput[u](-2.5,-2.8){\mbox{{\footnotesize $n+\alpha\; R$}}}
\psset{labelsep=-4.75ex}\nput{90}{2}{\mbox{{\footnotesize $n+1  \;B$}}}
\psset{labelsep=-4.75ex}\nput{90}{3}{n\; B}
\uput[u](2.5,-0.5){\mbox{{\footnotesize $n+1\; B$}}}
\uput[u](2.5,-0.9){\mbox{{\footnotesize $n+\alpha+\beta+1\; R$}}}
\uput[u](2.5,-3.5){\mbox{{\footnotesize $n\; B$}}}
\uput[u](2.5,-4.0){\mbox{{\footnotesize $n+\alpha+\beta\; R$}}}
\psset{labelsep=-4.75ex}\nput{90}{6}{\mbox{{\footnotesize $n+1  \;B$}}}
\psset{labelsep=-4.75ex}\nput{90}{7}{n\; B}
\psset{labelsep=-4.75ex}\nput{90}{8}{\mbox{{\footnotesize $n-1  \;B$}}}
\qline(-3.5,1.3)(-3.5,-6.8)
\qline(1.3,1.3)(1.3,-6.8)
\uput[u](-2.2,-6.0){\mbox{Experiment 1}}
\uput[u](2.6,-6.0){\mbox{Experiment 2}}
$$
\end{center}
\vspace{7cm}

For the general coefficients $x_n,y_n,s_n,r_n$ in \eqref{xysrJE} we obtain a family of Experiments 1 and 2, depending on $0\leq h_0\leq1$. Now both experiments are more complicated, but we observe that each probability $x_n,y_n,s_n,r_n$ is the multiplication of two positive numbers less than 1. Than means that for each experiment we will have to perform another experiment (different for each experiment) which combined gives the probabilities $x_n,y_n,s_n,r_n$. All these experiments can be seen as urn models with blue and red balls very similar to the ones we have already shown. There are certain choices of the parameters where all coefficients simplify considerably. For instance, for $\alpha=\beta=0$, we have
$$
x_n=\frac{n+1}{2n+1},\quad y_n=\frac{n}{2n+1},\quad n\geq0,\quad s_n=r_n=\frac{1}{2},\quad n\geq1,\quad s_0=1.
$$
Another (more complicated) example is for $\alpha=1, \beta=0$ and $h_0=1/2$:
\begin{align*}
x_n&=\frac{1+(n+1)(n+2)}{2[1+(n+1)^2]},\quad y_n=\frac{1+n(n+1)}{2[1+(n+1)^2]},\quad n\geq0,\\
s_n&=\frac{(n+1)(n^2+1)}{(2n+1)(1+n(n+1))},\quad r_n=\frac{n(1+(1+n)^2)}{(2n+1)(1+n(n+1))},\quad n\geq1,\quad s_0=1.
\end{align*}

Different (and more complicated) urn models for matrix-valued generalizations of Jacobi polynomials, as well as other probabilistic models in terms of Young diagrams, can be found in \cite{GPT3}. The model above can be seen as an application of rather sophisticated ideas from group representation theory in \cite{GPT3} to a much more classical setup.

\end{document}